\documentclass[reqno,12pt]{amsart}
\usepackage[utf8]{inputenc}
\usepackage[T1]{fontenc}
\usepackage{amsmath,amsfonts,amssymb,amsthm}
\usepackage{extarrows}
\usepackage{geometry}
\usepackage[normalem]{ulem}
\usepackage{graphicx}
\graphicspath{{./images/}}
\usepackage{eso-pic}
\usepackage{url}
\usepackage[all]{xy}
\usepackage{esint}
\usepackage[pdftex,
            colorlinks=true,
            linkcolor=blue,
            citecolor=red,
            pdfauthor={Alexis de Villeroché},
            pdftitle={},
            pdfcreator={pdflatex}]{hyperref}
\usepackage{bm}
\usepackage{dsfont}
\geometry{hmargin=2.75cm,vmargin=2.75cm}
\setcounter{tocdepth}{3}

\definecolor{verde}{RGB}{20,150,100}

\newcommand{\Z}{\mathbb{Z}}

\newcommand{\R}{\mathbb{R}}

\newcommand{\Rd}{\R^d}

\renewcommand{\O}{\Omega}

\newcommand{\A}{\mathcal{A}}
\newcommand{\F}{\mathcal{F}}
\newcommand{\Fpq}{\mathcal{F}_{p,q}}

\renewcommand{\phi}{\varphi}

\newcommand{\cp}{\mathop{\rm cap}\nolimits}
\newcommand{\dive}{\mathop{\rm div}\nolimits}
\newcommand{\eps}{\varepsilon}

\newcommand{\be}{\begin{equation}}
\newcommand{\ee}{\end{equation}}
\newcommand{\bib}[4]{\bibitem{#1}{\sc#2: }{\it#3. }{#4.}}

\numberwithin{equation}{section}
\theoremstyle{plain}
\newtheorem{trm}{Theorem}[section]

\newtheorem{prop}[trm]{Proposition}
\newtheorem{df}[trm]{Definition}

\theoremstyle{remark}
\newtheorem{rem}[trm]{\bf Remark}

\title[Optimal domains for the Cheeger inequality]{Optimal domains for the Cheeger inequality}
\author{Dorin Bucur, Giuseppe Buttazzo, Alexis de Villeroch\'e}
\date{}

\begin{document}

\begin{abstract}
In this paper we consider the scale invariant shape functional
$$\F_{p,q}(\O)=\frac{\lambda_p^{1/p}(\O)}{\lambda_q^{1/q}(\O)},$$
where $1\le q<p\le+\infty$ and $\lambda_p(\O)$ (respectively $\lambda_q(\O)$) is the first eigenvalue of the $p$-Laplacian $-\Delta_p$ (respectively $-\Delta_q$) with Dirichlet boundary condition on $\partial\O$. We study both the maximization and minimization problems for $\F_{p,q}$, and show the existence of optimal domains in $\R^d$, along with some of their qualitative properties. Surprisingly, the case of a bounded box $D$ constraint
$$\max\Big\{\lambda_q(\O)\ :\ \O\subset D,\ \lambda_p(\O)=1\Big\},$$
leads to a problem of different nature, for which the existence of a solution is shown by analyzing optimal capacitary measures. In the last section we list some interesting questions that, in our opinion, deserve to be investigated.
\end{abstract}

\maketitle

\noindent\textbf{Keywords: }shape optimization, $p$-Laplacian, Cheeger constant, principal eigenvalue.

\noindent\textbf{2020 Mathematics Subject Classification:} 49Q10, 49J45, 49R05, 35P15, 35J25.

\section{Introduction}\label{Intro}

The point of departure for this study is the celebrated Cheeger inequality \cite{C71}, which provides a fundamental link between spectral and geometric properties of domains. Specifically, for any open, bounded subset $\O\subset\R^d$, the inequality asserts that
\be\label{ch21}
\frac{\sqrt{\lambda(\O)}}{h(\O)}\ge\frac12
\ee
where $\lambda(\O)$ denotes the first eigenvalue of the Dirichlet Laplacian $-\Delta$ on $\O$, and $h(\O)$ is the Cheeger constant associated with the domain.

The first Dirichlet eigenvalue $\lambda(\O)$ is defined by the Rayleigh quotient as
\[\begin{split}
\lambda(\O)&=\inf\bigg\{\frac{\int|\nabla u|^2\,dx}{\int|u|^2\,dx}\ :\ u\in C^1_c(\O)\setminus\{0\}\bigg\}\\
&=\min\bigg\{\frac{\int|\nabla u|^2\,dx}{\int|u|^2\,dx}\ :\ u\in H^1_0(\O)\setminus\{0\}\bigg\},
\end{split}\]
where all integrals are understood over the entire space $\R^d$, with the convention that functions in $H^1_0(\O)$ are considered as extended by zero outside the domain $\O$.

The Cheeger constant $h(\O)$, a geometric quantity capturing isoperimetric properties of the domain, is defined as
$$h(\O)=\inf\bigg\{\frac{P(E)}{|E|}\ :\ E\Subset\O\bigg\}.$$
where $P(E)$ denotes the perimeter of the set $E$, and $|E|$ its Lebesgue measure.

The inequality \eqref{ch21} highlights a deep connection between the geometry of a domain and the spectral behavior of the Laplacian defined on it, forming the foundation for much of the analysis that follows.

There exist other equivalent formulations of the Cheeger constant $h(\O)$, each offering a distinct perspective on its variational and geometric character. A classical expression, mirroring the Rayleigh quotient but in the $L^1$ setting, is given by
\[\begin{split}
h(\O)&=\inf\bigg\{\frac{\int|\nabla u|\,dx}{\int|u|\,dx}\ :\ u\in C^1_c(\O)\setminus\{0\}\bigg\}\\
&=\inf\bigg\{\frac{\int|\nabla u|\,dx}{\int|u|\,dx}\ :\ u\in W^{1,1}_0(\O)\setminus\{0\}\bigg\},
\end{split}\]
where the infimum is taken over nontrivial functions in either the space of compactly supported smooth functions or in the Sobolev space $W^{1,1}_0(\O)$.

When the domain $\O$ possesses a Lipschitz boundary, the Cheeger constant $h(\O)$ can be characterized as
$$h(\O)=\inf\bigg\{\frac{P(E)}{|E|}\ :\ E\subset\O\bigg\},$$
thus highlighting the dual analytic and geometric nature of the Cheeger constant. Interestingly, the Cheeger constant also emerges as the limit case of the first eigenvalue of the $p$-Laplacian as $p\to1$. For $1\le p<+\infty$ the first Dirichlet eigenvalue of the $p$-Laplacian is defined by\be\label{lp}
\lambda_p(\O)=\inf\bigg\{\frac{\int|\nabla u|^p\,dx}{\int|u|^p\,dx}\ :\ u\in W^{1,p}_0(\O)\setminus\{0\}\bigg\},
\ee
so that, in this framework, the Cheeger constant satisfies $h(\O)=\lambda_1(\O)$, emphasizing its role as the $p=1$ analog of the principal eigenvalue. For a comprehensive account of the Cheeger constant and related optimization problems, we refer the reader to \cite{F21}, \cite{L15}, \cite{P11}, \cite{P17}, and \cite{PS25}.

Moreover, the normalized quantity $\lambda_p^{1/p}(\O)$ admits a meaningful extension to the case $p=\infty$, owing to the well-known asymptotic result (see, for instance, \cite{JLM99}):
$$\lim_{p\to\infty}\lambda_p^{1/p}(\O)=\rho(\O),$$
where $\rho(\O)$ denotes the so-called {\it inradius} of the domain $\O$, defined as the largest radius of a ball that can be inscribed in $\O$. Equivalently, $\rho(\O)$ is the maximum of the distance function to the boundary $\partial\O$, thereby tying together spectral asymptotics and intrinsic geometry in a particularly elegant way.

A broader perspective on Cheeger-type inequalities can be obtained by introducing the scale invariant shape functional
\be\label{fpq}
\F_{p,q}(\O)=\frac{\lambda_p^{1/p}(\O)}{\lambda_q^{1/q}(\O)},
\ee
which involves the first eigenvalues of the Dirichlet $p$- and $q$-Laplacians, with $p>q$. Within this framework, the classical Cheeger inequality \eqref{ch21} appears as a special instance of a more general principle. Specifically, for any $1\le q\le p\le+\infty$ and for every bounded open subset $\O$ of $\R^d$, the following inequality holds:
\be\label{cheegerp}
\F_{p,q}(\O)\ge\frac{q}{p}\qquad\hbox{for every }1\le q\le p\le+\infty.
\ee
This inequality can also be reformulated in terms of a monotonicity property, as:
$$\text{\it the map $p\mapsto p\lambda_p^{1/p}(\O)$ is monotonically nondecreasing.}$$
The proof of this family of inequalities is relatively elementary and rests on a careful application of H\"older's inequality; a detailed exposition can be found in \cite{BBP22}. While the constant $q/p$ in \eqref{cheegerp} is not optimal in general, it is known to be asymptotically sharp in high dimensions, that is, as the ambient space dimension $d\to\infty$ (see \cite{BBP22} for precise asymptotic estimates).

This variational comparison not only unifies classical spectral bounds under a single conceptual framework, but also provides insight into the interplay between the domain's geometry and the behavior of eigenvalues across the full range of $p$-Laplacian operators. An analysis of the functional $\F_{p,q}$ on manifolds can be found in \cite{CM03}.

It is worth observing that when the domain $\O$ has finite Lebesgue measure, the infimum in the variational characterization of $\lambda_p(\O)$ given in \eqref{lp} is indeed attained. In this case, there exists a generalized eigenfunction $u_p$ that realizes the minimum and satisfies the associated quasilinear eigenvalue PDE
$$\begin{cases}
-\Delta_p u_p=\lambda_p(\O) |u_p|^{p-2}\,u_p,\\
u_p\in W^{1,p}_0(\O),\quad\|u_p\|_{L^p(\O)}=1.
\end{cases}$$
where $\Delta_p u:=\dive\big(|\nabla u|^{p-2}\nabla u\big)$ denotes the $p$-Laplacian operator. It is convenient to extend the definition of the eigenvalue $\lambda_p(\O)$ to unbounded domains; namely, one defines
$$\lambda_p(\O)=\inf\big\{\lambda_p(A)\ :\ A\subset\O,\ A\text{ bounded}\big\}.$$
This construction preserves the variational character of the eigenvalue while ensuring that the definition remains meaningful even when $\O$ is not of finite measure.

With this convention, the shape functional $\F_{p,q}(\O)$ introduced in \eqref{fpq} remains well-defined for all open sets $\O\subset\R^d$ (bounded or not) such that $\lambda_q(\O)\ne0$. This generality is particularly useful when studying shape optimization problems related to the function $\F_{p,q}$, as we will do in the following sections.

Our primary interest lies in studying the extremal behavior - both minimization and maximization - of the shape functional $\F_{p,q}$. To this end, we define the associated optimal bounds:
\be\label{df:mpq,Mpq}
\begin{array}{l}
m(p,q)=\inf\big\{\Fpq(\O)\ :\ \O\subset\Rd,\ \O\mbox{ open, }\lambda_q(\O)>0\big\},\\
M(p,q) = \sup\big\{ \Fpq(\O)\ :\ \O\subset\Rd,\ \O\mbox{ open, }\lambda_q(\O)>0\big\}.
\end{array}
\ee
A key feature of the shape functional $\F_{p,q}$ is its invariance under scaling transformations. Indeed, using the well-known homogeneity of the $p$-Laplacian eigenvalue under homotheties,
$$\lambda_p(t\O)=t^{-p}\lambda_p(\O)\qquad\text{for every }t>0,$$
it follows immediately that the functional $\F_{p,q}$ satisfies
$$\F_{p,q}(t\O)=\F_{p,q}(\O)\qquad\text{for every }t>0.$$
This scaling invariance allows one to reduce the study of $\F_{p,q}$ to families of domains normalized with respect to either $\lambda_p$ or $\lambda_q$. More precisely, we may equivalently express the extremal values as:
$$\begin{array}{l}
m(p,q)=\Big(\inf\big\{\lambda_p(\O)\ :\ \lambda_q(\O)=1\big\}\Big)^{1/p}=\Big(\sup\big\{\lambda_q(\O)\ :\ \lambda_p(\O)=1\big\}\Big)^{-1/q}\\
M(p,q)=\Big(\sup\big\{\lambda_p(\O)\ :\ \lambda_q(\O)=1\big\}\Big)^{1/p}=\Big(\inf\big\{\lambda_q(\O)\ :\ \lambda_p(\O)=1\big\}\Big)^{-1/q}.
\end{array}$$

We now state the existence result of optimal domains for the functional $\F_{p,q}$.

\begin{trm}\label{Th:ExistenceMpqmpq}
Let $1\le q\le p\le\infty$. Then there exist two open and smooth subsets of $\R^d$, denoted by $\O_{p,q}^m$ and $\O_{p,q}^M$, such that the infimum and supremum in \eqref{df:mpq,Mpq} are attained:
$$\Fpq(\O_{p,q}^m)=m(p,q)\qquad\text{and}\qquad\Fpq(\O_{p,q}^M)=M(p,q).$$
Moreover, the quantity $M(p,q)$ is finite if and only if $q>d$.
\end{trm}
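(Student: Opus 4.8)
The plan is to establish existence for both the minimizer and maximizer via a concentration-compactness argument, treating separately the question of finiteness of $M(p,q)$ which hinges on the dimensional threshold $q>d$. Throughout I work with the normalized formulations $m(p,q)^p=\inf\{\lambda_p(\O):\lambda_q(\O)=1\}$ and $M(p,q)^p=\sup\{\lambda_p(\O):\lambda_q(\O)=1\}$, so that the task reduces to proving that these constrained extremal problems for $\lambda_p$ under a fixed value of $\lambda_q$ admit solutions.

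First I would address the finiteness of $M(p,q)$, since this is a necessary preliminary and already reveals the geometric heart of the matter. The key observation is that $\lambda_p(\O)$ controls the inradius: by the asymptotic $\lambda_p^{1/p}(\O)\to\rho(\O)$ and the monotonicity $p\mapsto p\lambda_p^{1/p}(\O)$, a bound on $\lambda_q(\O)$ from below (the constraint $\lambda_q=1$) forces the inradius to be bounded above, hence $\lambda_p(\O)$ is bounded for finite $p$. The subtle point is whether $\lambda_p(\O)$ can be made arbitrarily large while keeping $\lambda_q(\O)=1$: this is possible precisely by taking long thin domains (slabs or tubes) whose inradius is controlled but whose measure is infinite. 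The dichotomy at $q=d$ arises because for $q>d$ the Sobolev embedding $W^{1,q}_0\hookrightarrow L^\infty$ (Morrey) forces a quantitative lower bound on the measure of $\O$ in terms of $\lambda_q(\O)$, which in turn bounds $\lambda_p(\O)$ from above via the Faber--Krahn inequality; whereas for $q\le d$ one can construct a minimizing sequence of domains with $\lambda_q=1$ but $\lambda_p\to\infty$, showing $M(p,q)=+\infty$. I expect the careful verification of this construction (for $q\le d$) and the Morrey-based measure estimate (for $q>d$) to constitute the main technical work for the finiteness claim.

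For the existence of optimal domains, I would argue by taking a minimizing (resp.\ maximizing) sequence $\O_n$ with $\lambda_q(\O_n)=1$ and extracting a limit. The natural objects are the normalized eigenfunctions $u_n\in W^{1,q}_0(\O_n)$ with $\|u_n\|_{L^q}=1$; the constraint $\lambda_q(\O_n)=1$ bounds $\|\nabla u_n\|_{L^q}$, giving weak compactness in $W^{1,q}(\R^d)$. The difficulty is that the weak limit may lose mass (vanishing or splitting), so I would invoke a concentration-compactness principle to rule out vanishing using the eigenvalue constraint, and rule out dichotomy using the subadditivity properties of the extremal values under disjoint unions. The limiting set $\O$ is recovered as the positivity set of the limit function (or via $\gamma$-convergence of the associated domains), and lower/upper semicontinuity of $\lambda_p$ and $\lambda_q$ under this convergence yields optimality. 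Once a weak minimizer/maximizer is obtained, its smoothness follows from regularity theory for the associated free-boundary or overdetermined problem satisfied by the optimal eigenfunctions.

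The hard part will be the compactness step: controlling the two eigenvalues simultaneously along the sequence, since $\lambda_p$ and $\lambda_q$ respond differently to geometric degeneration. In particular, for the minimization of $\F_{p,q}$ one must prevent the optimal shape from spreading to infinity (which the measure and eigenvalue bounds should preclude in finite measure regimes), while for the maximization one relies essentially on the finiteness condition $q>d$ to keep the competitors in a compact family. I anticipate that the translation-invariance of $\R^d$ forces the concentration-compactness machinery rather than a naive direct method, and that establishing the correct semicontinuity of $\F_{p,q}$ under the relevant notion of domain convergence is where most of the care is required.
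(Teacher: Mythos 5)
Your proposal has two genuine gaps, one in the existence argument and one in the finiteness argument. For existence, the concentration-compactness strategy runs directly against the structure of this problem: since $\lambda_r$ of a disjoint union equals the \emph{minimum} of the eigenvalues of the pieces, the functional $\Fpq$ exhibits no strict subadditivity under splitting, so dichotomy cannot be ruled out --- and indeed the optimizer is in general attained \emph{only} through splitting. The paper's proof exploits exactly this: take a smooth bounded optimizing sequence $(\O_n)$ normalized by $\lambda_p(\O_n)=1$, translate the sets pairwise far apart, and set $\tilde\O=\bigcup_n(x_n+\O_n)$; then $\lambda_p(\tilde\O)=\inf_n\lambda_p(\O_n)=1$ and $\lambda_q(\tilde\O)=\inf_n\lambda_q(\O_n)=M(p,q)^{-q}$, so the union itself is an optimizer, and it is smooth because each $\O_n$ is. Your scheme also presupposes that the limit domain carries optimal eigenfunctions, but the paper explicitly remarks that the optimal domains may be unbounded with $\lambda_p$, $\lambda_q$ not attained by any Sobolev function, so the object you plan to extract need not exist; for the same reason the final appeal to free-boundary regularity is both unavailable at this generality and unnecessary.

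For the finiteness of $M(p,q)$, your mechanism when $q>d$ is logically reversed: Faber--Krahn bounds $\lambda_p$ from \emph{below} in terms of a measure upper bound, so a Morrey-type lower bound on $|\O|$ yields no upper bound on $\lambda_p$; likewise, an inradius bounded \emph{above} bounds $\lambda_p$ from below, not above. The correct tool is the two-sided inradius estimate of Bozzola--Brasco (Theorem \ref{th:BozzolaBrasco24}): for $r>d$ one has $C_{r,d}\le\rho(\O)^r\lambda_r(\O)\le\lambda_r(B(0,1))$, so the constraint $\lambda_q(\O)=1$ with $q>d$ forces $\rho(\O)\ge C_{q,d}^{1/q}$, whence $\lambda_p(\O)\le\lambda_p(B(0,1))\,\rho(\O)^{-p}$ is bounded. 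For $q\le d$ your slab/tube intuition fails --- a slab normalized so that $\lambda_q=1$ has fixed width and hence bounded $\lambda_p$ --- and the case in fact requires two distinct capacity-based constructions, which you do not supply: for $q\le d<p$ the set $\R^d\setminus\Z^d$ has $\lambda_p>0$ (finite inradius, $p>d$) yet $\lambda_q=0$ because $\Z^d$ has zero $q$-capacity; for $q<p\le d$ one needs the Cioranescu--Murat perforated balls $B_n=B(0,1)\setminus\big(\tfrac1n\Z^d\oplus B(0,r_n)\big)$, for which $\lambda_p(B_n)\to+\infty$ while $\lambda_q(B_n)\to C>0$.
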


\begin{rem}
It is worth emphasizing that the optimal domains $\O_{p,q}^m$ and $\O_{p,q}^M$ obtained in Theorem \ref{Th:ExistenceMpqmpq} may, in general, be unbounded. As a consequence, one cannot always associate a well-defined eigenfunction to $\lambda_p(\O)$ or $\lambda_q(\O)$, since the corresponding eigenvalues may not be attained by any function in the associated Sobolev space.

In several spectral optimization problems, the question of whether optimal shapes exist among all admissible domains in the Euclidean space $\R^d$ is both subtle and technically challenging (see for example \cite{B12}, \cite{MP13}). By contrast, in the present setting, the existence of an optimal domain in the entire space $\R^d$ can be established through a relatively straightforward argument. However, the situation becomes considerably more intricate when additional space constraints are imposed, specifically, when the admissible domains are required to lie within a fixed bounded set, that is $\O\subset D$ with $D$ bounded. In such cases, a much more refined analysis is necessary, involving the use of capacitary measures, as will be discussed in detail in Section \ref{s:DDD}.

In addition, the non-uniqueness of optimal sets should be noted. Due to the definition of the functional $\F_{p,q}$, optimal domains can be locally modified - e.g., by removing or adding compact subsets - without affecting the value of the functional. Thus, the minimizers and maximizers should be interpreted as equivalence classes of domains with the same spectral properties. \end{rem}

Among the possible optimal domains $\O_{p,q}^m$ and $\O_{p,q}^M$ achieving the values $m(p,q)$ and $M(p,q)$, a particularly interesting class arises when $p>q>d$. In this regime, we are able to construct optimal domains that are complements of discrete sets in $\R^d$. This phenomenon is closely related to the theory of Sobolev capacities: when $p>q>d$, single points in $\R^d$ have positive $W^{1,q}$-capacity, and therefore their removal can influence the spectral quantities under consideration.

We formalize this observation in the following result.

\begin{trm}\label{th:discret}
Let $d<q<p\le +\infty$. Then there exist open subsets $\O_{p,q}^m$ and $\O_{p,q}^M$ of $\R^d$ such that
$$\Fpq(\O_{p,q}^m)=m(p,q)\qquad\text{and}\qquad\Fpq(\O_{p,q}^M)=M(p,q)$$ 
and moreover, the complements $\Rd\setminus\O_{p,q}^m$ and $\Rd\setminus\O_{p,q}^M$ are discrete sets with no accumulation points.
\end{trm}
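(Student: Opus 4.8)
The plan is to realize the optimizers of Theorem~\ref{Th:ExistenceMpqmpq} inside the class of domains whose complement is discrete, exploiting in an essential way the hypothesis $q>d$. The enabling fact is the Morrey embedding $W^{1,q}(\Rd)\hookrightarrow C^0$, valid precisely because $q>d$ (and a fortiori for $p$, as $p>q>d$): functions of finite $q$-energy are continuous, so a single point carries strictly positive $q$- and $p$-capacity. Removing a point is therefore a genuine perturbation of $W^{1,q}_0$ and $W^{1,p}_0$, and domains $\O=\Rd\setminus S$ with $S$ discrete become legitimate competitors in \eqref{df:mpq,Mpq}, provided $S$ is infinite and spread out enough to keep $\lambda_q(\O)>0$.

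First I would record the admissibility and the spectral description of such domains. For an infinite discrete set $S$ without accumulation points one has $\lambda_q(\Rd\setminus S)>0$: testing the Rayleigh quotient with functions that are constant away from $S$ and vanish at its points, the numerator is bounded below by a sum of point capacities while the denominator grows like the ambient volume, so the quotient is controlled from below by a positive density. When $S$ is periodic with respect to a lattice $L$, with finitely many points per cell, the principal eigenfunction can be taken $L$-periodic, so that $\lambda_q(\Rd\setminus S)$ and $\lambda_p(\Rd\setminus S)$ coincide with the first Dirichlet eigenvalues of $-\Delta_q$ and $-\Delta_p$ on the punctured torus $(\Rd/L)\setminus(S/L)$. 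By the scale invariance of $\Fpq$, the value $\Fpq(\Rd\setminus S)$ depends only on the shape of the configuration $S$, not on its scale, which turns the optimization over this class into a scale-free problem.

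The core step is to show that the global extremal values are already attained in this class, namely
$$\inf\big\{\Fpq(\Rd\setminus S)\big\}=m(p,q),\qquad\sup\big\{\Fpq(\Rd\setminus S)\big\}=M(p,q),$$
the bounds being taken over discrete $S$ with no accumulation points. The inequalities $\ge m$ and $\le M$ are automatic, since such domains are admissible in \eqref{df:mpq,Mpq}. For the matching inequalities I would start from an optimal smooth domain $\O^\star$ given by Theorem~\ref{Th:ExistenceMpqmpq} and approximate it by discrete-complement domains: using that points carry positive capacity, one builds discrete sets $S_n$ reproducing, in the capacitary (i.e.\ $\gamma$-convergence) sense, the Dirichlet constraint carried by $\Rd\setminus\O^\star$, so that $\lambda_q(\Rd\setminus S_n)\to\lambda_q(\O^\star)$ and $\lambda_p(\Rd\setminus S_n)\to\lambda_p(\O^\star)$, whence $\Fpq(\Rd\setminus S_n)\to\Fpq(\O^\star)$.

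It then remains to produce an actual optimizer in the class and to verify that it has no accumulation points, and this is where I expect the real difficulty to lie. After normalizing the scale so that $\lambda_q=1$, I take an extremizing sequence $\Rd\setminus S_n$ and pass to a limiting configuration $S_\infty$. The obstacle is to prevent points of $S_n$ from clustering or accumulating, which would either destroy discreteness of the limit or alter the local capacity and make the eigenvalues jump. I expect to overcome this through a uniform separation estimate: a lower bound on the mutual distances of the points of any near-optimal configuration, forced by the normalization $\lambda_q=1$ together with the extremality of $\Fpq$, since capacitarily a cluster of nearby points can be strictly improved upon by spreading its points apart. Such an estimate simultaneously yields a minimal inter-point distance, hence no accumulation in $S_\infty$, and the compactness needed to pass to the limit; combined with the semicontinuity of $\lambda_p$ and $\lambda_q$ along the sequence, it shows that $\Rd\setminus S_\infty$ attains $m(p,q)$ (respectively $M(p,q)$), which proves the theorem.
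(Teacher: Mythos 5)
Your opening steps are sound and in fact coincide with ingredients of the paper's argument: for $q>d$ single points carry positive $q$-capacity, $\gamma_q$-convergence reduces to Hausdorff convergence of complements, and a fine grid $\xi\Z^d$ becomes capacitarily equivalent, as $\xi\to0$, to the full region it fills, which makes domains with discrete complement dense in value. The genuine gap is your final step. The ``uniform separation estimate'' you invoke is not merely unproven, it is false: for an unbounded domain, $\lambda_p$ and $\lambda_q$ are by definition infima over bounded subsets, so both eigenvalues of a union of distant pieces are determined by the best pieces alone. Hence one may take any near-optimal configuration and insert an arbitrarily dense cluster of complement points in a far-away region without changing $\lambda_p$, $\lambda_q$, or $\Fpq$ at all; near-optimality therefore exerts no control on local point spacings, and extremizing sequences whose limiting configurations have accumulation points do exist. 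Your compactness step fails for a second, independent reason: under local Hausdorff convergence of the sets $S_n$, the bounded regions realizing the infimum defining $\lambda_p(\Rd\setminus S_n)$ can drift to infinity, so the limit $S_\infty$ may satisfy $\lambda_p(\Rd\setminus S_\infty)>\liminf_n\lambda_p(\Rd\setminus S_n)$ --- the wrong direction for the minimization --- and the normalization $\lambda_q=1$ can likewise be lost in the limit. So no limit of an extremizing sequence can be shown optimal along these lines.

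The paper avoids any compactness argument by a one-shot explicit construction. Starting from a minimizing sequence $(\O_n)$ with $\lambda_q(\O_n)=1$, it slightly shrinks each piece, setting $\tilde\O_n=\big(1+\frac1n\big)^{-1/q}\O_n$, places the pieces at mutually distant centers $x_n$, and builds an increasing sequence of sets $A_n$ in which every ``wall'' is a fine point grid: the ambient constraint is $\Rd\setminus\xi_0\Z^d$, and the closed ball around $x_k$ enclosing $\tilde\O_k$ is punctured by a grid $\xi_k\Z^d$ rather than removed outright. Since $q>d$, a domain punctured by $B(0,1)\cap\xi\Z^d$ $\gamma_q$-converges, as $\xi\to0$, to the domain with $\overline B(0,1)$ removed, so by continuity each $\xi_k$ can be tuned so that $1+\frac1{n+1}<\lambda_q(A_n)\le1+\frac1n$. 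The union $\O_m=\bigcup_nA_n$ then satisfies $\lambda_q(\O_m)=1$ by $\gamma_q$-convergence along increasing sequences, while domain monotonicity gives $\lambda_p(\O_m)\le\inf_n\lambda_p(\tilde\O_n)=m(p,q)^p$, so $\O_m$ is optimal; its complement is a locally finite union of grid points, hence discrete without accumulation points by construction rather than by any separation estimate. To repair your outline you would have to replace the limit-of-configurations step by some such direct assembly of the entire minimizing sequence into a single domain, exploiting that eigenvalues of distant unions are infima over the pieces.
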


A particularly striking case arises when $p=\infty$, in which the functional $\F_{\infty,q}$ takes the explicit form
$$\F_{\infty,q}(\O)=\frac{1}{\rho(\O)\,\lambda_q^{1/q}(\O)}.$$
where $\rho(\O)$ denotes the inradius of $\O$.

By the scaling invariance of both $\rho(\O)$ and $\lambda_q(\O)$, we may, without loss of generality, impose the normalization $\rho(\O)=1$. Under this constraint, the optimization of $\F_{\infty,q}(\O)$ reduces to the study of the eigenvalue $\lambda_q(\O)$ among all domains with fixed inradius.

The maximization problem for $\lambda_q$ in this setting is straightforward: it is well known that among all open sets with fixed inradius, the Euclidean ball maximizes $\lambda_q$, thereby achieving the minimum of $\F_{\infty,q}$.

In contrast, the minimization problem for $\lambda_q$ under the constraint that $\rho(\O)=1$ is much richer and more intricate. In the two-dimensional case ($d=2$), we conjecture that an optimal configuration is given by the domain $\O^*=\R^2\setminus X$
where $X$ is a discrete set of points forming the centers of a regular hexagonal tessellation of the plane. This conjecture is supported by numerical simulations and geometric considerations, which reduce the question to an optimization problem involving triangles inscribed in a unit circle. In this simplified setting, the hexagonal arrangement emerges naturally as the most efficient configuration in minimizing $\lambda_q$ under the inradius constraint.

When an additional geometric constraint is imposed on the admissible domains - namely, requiring that $\O$ be contained in a fixed bounded box $D\subset\R^d$ - the analysis of the optimization problems becomes significantly more subtle. In this constrained setting, the existence of optimal domains requires the theory of Sobolev capacities and work within the framework of $p$-quasi open sets. We address this issue in detail in Section~\ref{s:DDD}, where we establish the following existence result.

\begin{trm}\label{quasi}
Let $1<q<p <+\infty$. Then, for every bounded open set $D\subset\R^d$, there exist optimal $p$-quasi open sets $\O_1$ and $\O_2$ solving the following constrained variational problems:
\be\label{pb:boundedmin}
\min\big\{\lambda_p(\O)\ :\ \O\subset D,\ \lambda_q(\O)=1\big\};
\ee
\be\label{pb:boundedmax}
\max\big\{\lambda_q(\O)\ :\ \O\subset D,\ \lambda_p(\O)=1\big\}.
\ee
respectively.
\end{trm}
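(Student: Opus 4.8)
The plan is to recast both problems \eqref{pb:boundedmin}--\eqref{pb:boundedmax} in the relaxed setting of capacitary measures and to solve them by the direct method, the genuine difficulty being the simultaneous presence of the two exponents. I first record the elementary fact that, since $q<p$, every set of zero $p$-capacity has zero $q$-capacity; hence a $p$-quasi-open set is automatically $q$-quasi-open, both $\lambda_p(\O)$ and $\lambda_q(\O)$ are well defined through the quasi-continuous representatives of $W^{1,p}_0(\O)$ and $W^{1,q}_0(\O)$, and one has the inclusion $\M^q_0(D)\subset\M^p_0(D)$ of the corresponding spaces of capacitary measures on $D$ (those charging no $q$-, resp. $p$-, polar set). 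For $\mu\in\M^q_0(D)$ I extend both eigenvalues by the augmented Rayleigh quotients $\lambda_r(\mu)=\min\{\int_D|\nabla u|^r\,dx+\int_D|u|^r\,d\mu:\ \|u\|_{L^r(D)}=1\}$, $r\in\{p,q\}$, and, using the monotonicity of $\O\mapsto\lambda_r(\O)$ under inclusion, I relax the equality constraints to inequalities: \eqref{pb:boundedmin} becomes the minimization of $\lambda_p$ over $\{\lambda_q\ge1\}$ and \eqref{pb:boundedmax} the maximization of $\lambda_q$ over $\{\lambda_p\le1\}$, the optimizer saturating the constraint in each case.

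For the compactness I would use the $\gamma_p$-convergence, for which $\M^p_0(D)$ is sequentially compact and $\mu\mapsto\lambda_p(\mu)$ is continuous. Starting from a minimizing (resp. maximizing) sequence of $p$-quasi-open sets, identified with measures $\mu_n\in\M^q_0(D)\subset\M^p_0(D)$, I extract along a single subsequence both a $\gamma_p$-limit $\mu$ and a $\gamma_q$-limit $\nu$. Continuity of $\lambda_p$ along the $\gamma_p$-limit yields at once that $\lambda_p(\mu)$ equals the infimum in \eqref{pb:boundedmin} in the first problem, and that $\lambda_p(\mu)\le1$ in the second, so that the constraint is preserved; it remains only to control $\lambda_q$ along the $\gamma_p$-limit, which is the crux.

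This step is nontrivial because $\gamma$-convergence is exponent-dependent: the strange term generated by a homogenizing family of perforations differs for the $p$- and the $q$-Laplacian, so $\lambda_q$ is not $\gamma_p$-continuous. The key lemma I would isolate is the one-sided comparison of the two limits, namely $\mu\ge\nu$ in the sense that $\int_D|u|^q\,d\mu\ge\int_D|u|^q\,d\nu$ for every $q$-quasi-continuous $u$; equivalently, $\lambda_q$ is upper semicontinuous along $\gamma_p$-convergence. Its mechanism is the monotonicity of capacity in the exponent: for $q<p$ the $p$-capacity of a small obstacle dominates its $q$-capacity (indeed a set may be $p$-effective while $q$-invisible, but never the converse), so any obstacle is at least as effective for the $p$-problem as for the $q$-problem; I would make this precise through the local representation of the $\gamma$-limit measures by the rescaled capacities of their active sets. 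Granting the lemma, $\gamma_q$-continuity of $\lambda_q$ gives $\lambda_q(\nu)=\lim_n\lambda_q(\mu_n)$, whence $\lambda_q(\mu)\ge\lambda_q(\nu)\ge1$ in \eqref{pb:boundedmin}, so that $\mu$ is admissible and optimal, while in \eqref{pb:boundedmax} $\lambda_q(\nu)=\lim_n\lambda_q(\mu_n)$ equals the supremum and $\lambda_q(\mu)\ge\lambda_q(\nu)$ shows, together with admissibility, that $\mu$ attains it. In both cases $\mu$ is an optimal capacitary measure.

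It remains to show that the optimal measure $\mu$ is itself a $p$-quasi-open set, i.e. carries no finite part. Writing $\mu=\infty_{D\setminus\O}+\tilde\mu$, where $\O$ is the $p$-quasi-open regular set of $\mu$ (the measure $\infty_{D\setminus\O}$ being equal to $0$ on $\O$ and $+\infty$ outside) and $\tilde\mu$ its finite part, I would argue $\tilde\mu=0$ by an optimality/comparison argument: if $\tilde\mu$ were needed to keep $\lambda_q\ge1$, the constraint could instead be restored by inserting a thin obstacle which, by the same capacity-monotonicity, raises $\lambda_q$ more efficiently than a diffuse potential while costing strictly less to $\lambda_p$, contradicting the optimality of $\mu$. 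Once $\tilde\mu=0$ we have $\mu=\infty_{D\setminus\O}$, so $\lambda_p(\O)=\lambda_p(\mu)$ is the optimal value and $\lambda_q(\O)=\lambda_q(\mu)\ge1$; hence $\O$ solves \eqref{pb:boundedmin}. Problem \eqref{pb:boundedmax} is treated symmetrically, exchanging the roles of objective and constraint. The two delicate points, on which the whole scheme rests, are the cross-exponent comparison $\mu\ge\nu$ of the $\gamma$-limits and this exclusion of a diffuse part.
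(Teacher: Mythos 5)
Your overall skeleton --- extracting, along a single subsequence of the optimizing sequence, both a $\gamma_p$-limit $\mu$ and a $\gamma_q$-limit $\nu$, exploiting compactness and $\gamma_p$-continuity of $\lambda_p$ (resp.\ $\gamma_q$-continuity of $\lambda_q$), relaxing the equality constraint and saturating it at the end --- is exactly the paper's strategy. But both of the ``delicate points'' you flag at the end are genuine gaps, and the second one breaks the argument. First, your key lemma is the one-sided comparison $\mu\ge\nu$, ``equivalently'' upper semicontinuity of $\lambda_q$ along $\gamma_p$-convergence. This is weaker than what the paper proves and uses: Theorem~\ref{th:ofin} shows that $\nu$ \emph{vanishes identically} on the $p$-quasi open set $\O_\mu=\{w(\mu)>0\}$ where $\mu$ is finite, i.e.\ $\nu\le\infty_{D\setminus\O_\mu}$, a statement about the \emph{set} $\O_\mu$ rather than about the measure $\mu$. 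Note also that $\lambda_q(\mu)$ is not even well defined for a general $\gamma_p$-limit: since $q<p$, $p$-polar sets are $q$-polar but not conversely, so $\mu\in\M_0^p(D)$ may charge a $q$-polar set (e.g.\ $\mu=\delta_{x_0}$ when $q\le d<p$), making $\int|u|^q\,d\mu$ representative-dependent. Your proof mechanism for the lemma (``local representation of the $\gamma$-limit measures by rescaled capacities of their active sets'') is a heuristic, not an argument; the paper's proof is a concrete test-function computation: one takes a recovery sequence $w_n$ for the torsion function $w=w(\mu)$, tests the $\Gamma$-liminf inequality of the $q$-energies with $u_n=(w-\alpha)^+H(w_n)$, $H(s)=(s/\alpha)\wedge1$, and uses H\"older with exponent $p/q$ (this is precisely where $q<p$ enters) to kill the error terms, yielding $\int((w-\alpha)^+)^q\,d\nu=0$ for every $\alpha>0$.

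The step that would fail is your exclusion of the diffuse part $\tilde\mu$. Passing from $\mu=\infty_{D\setminus\O}+\tilde\mu$ to the set $\O_\mu$ discards the nonnegative term $\int|u|^q\,d\tilde\mu$ from the Rayleigh quotient, so it can only \emph{decrease} $\lambda_q$: the monotonicity \eqref{monot} runs against you exactly in the constraint, and your comparison $\lambda_q(\mu)\ge\lambda_q(\nu)\ge1$ does not transfer to $\lambda_q(\O_\mu)\ge1$. Your fallback --- proving $\tilde\mu=0$ for every optimal measure by inserting ``a thin obstacle which raises $\lambda_q$ more efficiently than a diffuse potential while costing strictly less to $\lambda_p$'' --- is unsubstantiated (no construction, no capacity estimate) and, more importantly, unnecessary and probably not true in the generality claimed: nothing forces an optimal \emph{measure} to have no diffuse part. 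The paper sidesteps the issue entirely: by Theorem~\ref{th:ofin}, $\nu\le\infty_{D\setminus\O_\mu}$, whence directly $\lambda_q(\O_\mu)\ge\lambda_q(\nu)=\lim_n\lambda_q(\O_n)=1$, while \eqref{monot} gives $\lambda_p(\O_\mu)\le\lambda_p(\mu)=\inf\eqref{pb:boundedmin}$; the lower bound on $\lambda_q(\O_\mu)$ comes from $\nu$, never from $\mu$. One then enlarges $\O_\mu$ to $\O'$ with $\O_\mu\subset\O'\subset D$ and $\lambda_q(\O')=1$ exactly --- possible thanks to the standing nontriviality assumption $\lambda_q(D)<1$ (resp.\ $\lambda_p(D)<1$ for \eqref{pb:boundedmax}), which your write-up never invokes but which your saturation step also needs --- and $\O'$ is optimal; the maximization problem is handled symmetrically. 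Until you replace the one-sided comparison $\mu\ge\nu$ by the full vanishing statement $\nu=0$ on $\O_\mu$, the passage from the optimal capacitary measure to a $p$-quasi open set does not close.
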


\begin{rem}
It is important to observe that, in the bounded setting, the scale-invariant formulation of the functional $\F_{p,q}$ no longer yields an equivalent problem. Indeed, the constraint $\O\subset D$ prohibits the use of homotheties, and therefore the scale-invariance, that plays a crucial role in the unbounded case, is lost.

As a consequence, the constrained problem
$$
\inf\big\{\F_{p,q}(\O)\ :\ \O\subset D\big\}
$$
is not equivalent to either of the problem \eqref{pb:boundedmin} or \eqref{pb:boundedmax}, and must be treated using different techniques.
\end{rem}

The structure of the paper is as follows. In Section~\ref{sprel}, we collect several preliminary results and technical tools that will be employed throughout the analysis. Section~\ref{sexis} is devoted to proving the existence of optimal domains for the unconstrained functional $\F_{p,q}$. In Section~\ref{s:DDD}, we focus on the constrained case where the admissible domains are required to lie within a fixed bounded region $D$. Finally, in Section~\ref{s:open}, we conclude by presenting a selection of open problems and directions for future investigation.

\section{Preliminary results}\label{sprel}

In this section, we introduce the principal concepts that will be used in the rest of the paper. Alongside these definitions, we establish a few preliminary results that will be crucial in the subsequent analysis. For a general overview of shape optimization problems and further related details, we refer the reader to the monographs \cite{BB05} and \cite{HP05}.

\begin{prop}\label{prop:minimalityHolder}
Let $\O\subset\R^d$ be an open set, and let $q\le p$. Then the following inequality holds:
$$\Fpq(\O)\ge\frac{q}{p}.$$
\end{prop}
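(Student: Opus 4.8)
The plan is to prove the equivalent statement that $\lambda_q^{1/q}(\O)\le \frac{p}{q}\,\lambda_p^{1/p}(\O)$, which is precisely the asserted monotonicity of $p\mapsto p\lambda_p^{1/p}(\O)$. The cases $q=p$ (where the inequality degenerates to an equality) and $p=\infty$ (where the right-hand side $q/p$ equals $0$) are trivial, so I assume $1\le q<p<\infty$. The key idea is a Cheeger-type substitution: starting from an arbitrary nonnegative test function $u\in C^1_c(\O)$ for $\lambda_p$, I build the competitor $w=u^{p/q}$ and estimate $\lambda_q$ through $w$. Since $p/q>1$, the map $t\mapsto t^{p/q}$ is $C^1$ on $[0,+\infty)$, so $w\in C^1_c(\O)$ is admissible for $\lambda_q$ with $\nabla w=\frac{p}{q}\,u^{p/q-1}\nabla u$. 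Working with test functions rather than with an eigenfunction is what lets the argument cover unbounded $\O$, where $\lambda_p$ need not be attained.

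Next I would compute the Rayleigh quotient of $w$ and apply H\"older's inequality with the conjugate exponents $p/q$ and $p/(p-q)$ to the gradient term, using that $(p/q-1)q=p-q$:
\[
\int|\nabla w|^q\,dx=\Big(\frac{p}{q}\Big)^q\int u^{(p-q)}|\nabla u|^q\,dx\le\Big(\frac{p}{q}\Big)^q\Big(\int|\nabla u|^p\,dx\Big)^{q/p}\Big(\int u^{p}\,dx\Big)^{(p-q)/p}.
\]
The exponent $p/q$ is chosen exactly so that all remaining powers of $u$ match: the H\"older factor produces $\int u^p$, the denominator $\int w^q=\int u^{p}$ is the very same integral, and $\int|\nabla u|^p=R_p(u)\int u^p$, where $R_p(u)$ denotes the $p$-Rayleigh quotient of $u$. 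After these substitutions every occurrence of $\int u^p$ cancels, leaving the clean bound
\[
\lambda_q(\O)\le\frac{\int|\nabla w|^q\,dx}{\int|w|^q\,dx}\le\Big(\frac{p}{q}\Big)^q\,R_p(u)^{q/p}.
\]

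Finally, since this holds for every nonnegative $u\in C^1_c(\O)$ and $x\mapsto x^{q/p}$ is increasing, I take the infimum over $u$ to obtain $\lambda_q(\O)\le(p/q)^q\lambda_p^{q/p}(\O)$; extracting $q$-th roots and rearranging yields $\Fpq(\O)=\lambda_p^{1/p}(\O)/\lambda_q^{1/q}(\O)\ge q/p$. The main obstacle, and the only genuinely delicate point, is the exponent bookkeeping: one must check that the single choice $\beta=p/q$ simultaneously makes $w$ admissible, makes the H\"older pairing exact, and forces the powers of $u$ to collapse to $1$. Any other exponent leaves a residual factor involving $\int u^p$ (equivalently, the measure of the support), which would destroy the scale-invariant constant $q/p$. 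A secondary technical issue is justifying the chain rule and the membership $w\in W^{1,q}_0(\O)$ if one prefers to argue with general Sobolev competitors rather than smooth ones; this is handled by the density of $C^1_c(\O)$ together with the fact that $p/q\ge 1$.
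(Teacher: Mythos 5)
Your proof is correct and follows essentially the same route as the paper's: both hinge on the substitution $w=u^{p/q}$ and a single application of H\"older's inequality with conjugate exponents $p/q$ and $p/(p-q)$, yielding $\lambda_q(\O)\le(p/q)^q\,\lambda_p(\O)^{q/p}$. The only cosmetic differences are that you take an infimum over nonnegative $C^1_c(\O)$ test functions while the paper uses an $\eps$-near-minimizer $v\in W^{1,p}_0(\O)$ and lets $\eps\to0$, and that you explicitly dispose of the trivial cases $q=p$ and $p=\infty$.
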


\begin{proof}
This result follows as a direct consequence of H\"older's inequality. \\
Let $\eps>0$ and choose a function $v\in W^{1,p}_0(\O)$ such that
$$\lambda_p(\O)\leq \frac{\int_\O|\nabla v|^pdx}{\int_\O|v|^pdx}\le\big(\lambda_p(\O)^{q/p}+\eps\big)^{p/q}.$$
Now consider the function $v^{p/q}\in W^{1,q}_0(\O)$. A direct computation yields:
$$\begin{array}{ll}
\lambda_q(\O)&\le\displaystyle\frac{\int_\O|\nabla v^{p/q}|^qdx}{\int_\O|v^{p/q}|^qdx}\\
&=\left(\frac{p}{q}\right)^q\,\displaystyle\frac{\int_\O|v|^{p-q}|\nabla v|^qdx}{\int_\O|v|^pdx} \\
&\le\left(\frac{p}{q}\right)^q\,\displaystyle\frac{\left(\int_\O|v|^pdx\right)^{(p-q)/p}\,\left(\int_\O|\nabla v|^pdx\right)^{q/p}}{\int_\O|v|^pdx}\\
&\le\left(\frac{p}{q}\right)^q\,\big(\lambda_p(\O)^{q/p}+\eps\big).
\end{array}
$$
Letting $\eps\to0$ we obtain the desired inequality.
\end{proof}

\begin{trm}\label{th:BozzolaBrasco24}
Let $d<p<+\infty$. Then there exists a constant $C_{p,d}>0$, depending only on $p$ and $d$, such that for every open set $\O\subset\R^d$ the following two-sided estimate holds:
$$C_{p,d}\le\rho(\O)^p\,\lambda_p(\O)\le\lambda_p(B(0,1))$$
where $\rho(\O)$ denotes the inradius of $\O$.
\end{trm}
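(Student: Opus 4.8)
The plan is to treat the two inequalities separately, the right-hand one being elementary and the left-hand one being the substantive part where the hypothesis $p>d$ is used. For the upper bound $\rho(\O)^p\lambda_p(\O)\le\lambda_p(B(0,1))$, I would argue by domain monotonicity and scaling. By definition of the inradius, $\O$ contains balls $B(x_0,r)$ with $r$ arbitrarily close to $\rho(\O)$ (equal to $\rho(\O)$ when the supremum is attained). Since $\lambda_p$ is nonincreasing with respect to inclusion and homogeneous of degree $-p$ under homotheties, one gets $\lambda_p(\O)\le\lambda_p\big(B(x_0,r)\big)=r^{-p}\lambda_p(B(0,1))$, and letting $r\to\rho(\O)$ yields the claim. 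For unbounded $\O$ this is consistent with the convention $\lambda_p(\O)=\inf\{\lambda_p(A):A\subset\O\text{ bounded}\}$, since each inscribed ball is an admissible bounded competitor.

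For the lower bound, I would reduce the estimate $\lambda_p(\O)\ge C_{p,d}\,\rho(\O)^{-p}$ to a Poincar\'e-type inequality
$$\int_\O|u|^p\,dx\le C_{p,d}\,\rho(\O)^p\int_\O|\nabla u|^p\,dx\qquad\text{for every }u\in C^\infty_c(\O),$$
which then extends to $W^{1,p}_0(\O)$ by density and, applied to bounded subsets $A\subset\O$ (for which $\rho(A)\le\rho(\O)$), also covers unbounded domains through the inf-definition of $\lambda_p$. The only place where $p>d$ enters is Morrey's inequality: for $p>d$ and any ball $B_R$ containing two points $x,z$, one has $|u(x)-u(z)|\le C_{p,d}|x-z|^{1-d/p}\|\nabla u\|_{L^p(B_R)}$, with a scale-invariant constant $C_{p,d}$.

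The heart of the argument is a pointwise bound. Fixing $u\in C^\infty_c(\O)$ and $x\in\O$, set $d(x)=\mathrm{dist}(x,\partial\O)\le\rho(\O)$, choose a nearest boundary point $z\in\partial\O$ with $|x-z|=d(x)$, and apply Morrey's inequality on the ball $B\big(x,2d(x)\big)$, which contains $z$; since $u$ has compact support in $\O$ we have $u(z)=0$, whence
$$|u(x)|^p\le C_{p,d}\,d(x)^{p-d}\int_{B(x,2d(x))}|\nabla u(y)|^p\,dy.$$
Integrating in $x$ over $\O$ and exchanging the order of integration by Fubini, the weight attached to $|\nabla u(y)|^p$ is the measure of $\{x:|x-y|<2d(x)\}$, which is contained in $B\big(y,2\rho(\O)\big)$ because $d(x)\le\rho(\O)$, hence bounded by $C_d\,\rho(\O)^d$; combining this with $d(x)^{p-d}\le\rho(\O)^{p-d}$ produces exactly the desired Poincar\'e inequality, and therefore the lower bound with a constant $C_{p,d}>0$.

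The main obstacle is the lower bound, and specifically the fact that it genuinely requires $p>d$: for $p\le d$ Morrey's inequality is unavailable and the estimate is false (the complement of a lattice of points has finite inradius but vanishing $\lambda_p$). I would therefore stress that $p>d$ is precisely what makes the continuous representative of $u$ vanish on $\partial\O$ and what produces the positive H\"older exponent $1-d/p$ driving the pointwise estimate. The remaining ingredients — the local form of Morrey's inequality on $B(x,2d(x))$ with a scale-invariant constant and the Fubini multiplicity bound — are routine once this structure is in place.
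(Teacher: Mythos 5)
Your proof is correct, but it is worth noting that the paper does not actually prove this statement: its ``proof'' is a one-line citation to Bozzola--Brasco \cite{BB24}, where the result is obtained as a special case of much more general Poincar\'e--Sobolev inequalities involving a \emph{capacitary} inradius, valid for all $p>1$ (for $p>d$ points have positive $p$-capacity, so the capacitary and geometric inradii coincide and their statement reduces to the one quoted here). Your argument is therefore a genuinely different, self-contained and more elementary route, and it is the classical one for the supercritical regime: domain monotonicity plus scaling for the upper bound, and for the lower bound the pointwise Morrey estimate on $B(x,2d(x))$ against a nearest boundary point $z$ (where $u$ vanishes because $\mathrm{supp}\,u$ is a compact subset of $\Omega$), followed by Fubini with the multiplicity bound $\{x:|x-y|<2d(x)\}\subset B(y,2\rho(\Omega))$ and $d(x)^{p-d}\le\rho(\Omega)^{p-d}$ (here $p>d$ makes the exponent positive, which is exactly where the hypothesis enters a second time). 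All steps check out: the Morrey constant on a ball is scale-invariant and replacing $|x-z|=d(x)$ by the radius $2d(x)$ only costs a factor $2^{1-d/p}$; the extension from $C^\infty_c(\Omega)$ to $W^{1,p}_0(\Omega)$ is by definition of that space; and any $u\in C^\infty_c(\Omega)$ lives in a bounded subset of $\Omega$, so your inequality is compatible with the paper's convention $\lambda_p(\Omega)=\inf\{\lambda_p(A):A\subset\Omega \text{ bounded}\}$. Your counterexample for $p\le d$ (the complement of $\Z^d$, which has finite inradius but zero $q$-capacity complement, hence $\lambda_p=0$) is also exactly the mechanism the paper itself exploits later in the proof of Theorem \ref{Th:ExistenceMpqmpq}. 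The only cosmetic caveats: your argument needs $\partial\Omega\ne\emptyset$ and $\rho(\Omega)<+\infty$ (otherwise $\lambda_p(\Omega)=0$ by your upper-bound step and the product $\rho(\Omega)^p\lambda_p(\Omega)$ is degenerate), and the Fubini weight is $\int d(x)^{p-d}\,dx$ over that set rather than its bare measure --- both points you effectively handle. What the citation to \cite{BB24} buys over your proof is generality (all $p$, via capacity rather than Morrey); what your proof buys is a short, transparent argument with an explicit constant for the only case the paper needs.
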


\begin{proof}
The proof of this result can be found in \cite{BB24}.
\end{proof}

We recall that for a Sobolev function $u\in W^{1,p}(\R^d)$, with $p>1$, the {\it quasi-continuous} representative
$$\tilde u(x)=\lim_{\eps\to0}\frac{1}{|B(x,\eps)|}\int_{B(x,\eps)}\!\!\!u(y)\,dy$$
is defined (in the sense that the limit above exists) up to a set of $p$-capacity zero. Here the $p$-capacity of a set $E\subset\R^d$, that we denote by $\cp_p$, is defined by
$$\cp_p(E)=\inf\bigg\{\int\big(|\nabla u|^p+|u|^p\big)\,dx\ :\ u\in W^{1,p}_0(\R^d),\ u=1\text{ in a neighborhood of }E\bigg\}.$$
In the following, with an abuse of notation, we often identify the functions $u$ and $\tilde u$, since the sets of $p$-capacity zero do not influence in any way the quantities involved in our framework. 

\begin{df}\label{def:quasio}
Let $p>1$. A set $\O\subset\R^d$ is said to be $p$-quasi open if there exists a function $u\in W^{1,p}(\R^d)$ such that $\O=\{\tilde u>0\}$ up to sets of $p$-capacity zero.
\end{df}

Thanks to the Sobolev embedding theorem, when $p>d$, any $p$-quasi open set is necessarily open. Consequently, in this regime, the notions of open and $p$-quasi open sets coincide. In this case, any point has a positive capacity and $p$-quasi-continuity comes to continuity.

If $\O$ is $p$-quasi open, one can define the Sobolev space $W^{1,p}_0(\O)$ as the collection of all functions $u\in W^{1,p}(\R^d)$ such that $\tilde u=0$ {\it quasi-erverywhere} in $\R^d\setminus\O$. In particular, the first Dirichlet eigenvalue $\lambda_p(\O)$ is well-defined for every $p$-quasi open set $\O$, and can be characterized variationally by the expression given in \eqref{lp}.

An essential ingredient in the analysis of existence results for optimal domains associated with the shape functional $\F_{p,q}$ is the notion of $p$-capacitary measures and the related concept of $\gamma_p$-convergence. These tools allow us to extend the needed variational principles to a broader class of objects beyond classical open sets.

\begin{df}\label{def:capp}
Let $1<p\le d$. A nonnegative Borel measure $\mu$, possibly taking the value $+\infty$, is said to be of $p$-capacitary type if
$$\mu(E)=0\qquad\text{for every Borel set $E$ with $p$-capacity zero.}$$
\end{df}

Measures of $p$-capacitary type provide a natural generalization of $p$-quasi open sets. Indeed, if $\O\subset\R^d$ is a $p$-quasi open set, we may associate to it the Borel measure
$$\infty_{\R^d\setminus\O}(E)=\begin{cases}
0&\text{if }\cp_p(E\setminus\O)=0\\
+\infty&\text{otherwise,}
\end{cases}$$
which is easily seen to be of $p$-capacitary type.

More generally, given a $p$-capacitary measure $\mu$, one can define the first eigenvalue $\lambda_p(\mu)$ via the relaxed Rayleigh quotient:
$$\lambda_p(\mu)=\inf\bigg\{\frac{\int|\nabla u|^p\,dx+\int|u|^pd\mu}{\int|u|^p\,dx}\ :\ u\in W^{1,p}(\R^d)\setminus\{0\}\bigg\}.$$
In particular, when $\mu=\infty_{\R^d\setminus\O}$ for a $p$-quasi open set $\O$, we recover the usual Dirichlet eigenvalue:
$$\lambda_p(\infty_{\R^d\setminus\O})=\lambda_p(\O).$$
An immediate consequence of the variational formulation is the following monotonicity property:
\be\label{monot}
\lambda_p(\mu_1)\le\lambda_p(\mu_2)\qquad\text{whenever }\mu_1\le\mu_2\text{ (as measures)}.
\ee
When dealing with shape optimization problems under domain constraints, it is natural to restrict attention to subsets of a fixed bounded open set $D\subset\R^d$. In this context, the class of admissible domains is given by
$$\A(D)=\big\{\O\subset D\ :\ \O\text{ $p$-quasi open}\big\}.$$

\begin{df}\label{def:gammap}
Let $(\mu_n)$ be a sequence of $p$-capacitary measures supoorted in $D$ ($p>1$) and let $\mu$ be another $p$-capacitary measure on $D$. We say that $\mu_n$ $\gamma_p$-converges to $\mu$, and write $\mu_n\stackrel{\gamma_p}\to\mu$, if the corresponding sequence of functionals
$$F_n(u)=\int_D|\nabla u|^pdx+\int_D|u|^pd\mu_n\qquad u\in W^{1,p}_0(D)$$
$\Gamma$-converges in $L^p(D)$ to the functional
$$F(u)=\int_D|\nabla u|^pdx+\int_D|u|^pd\mu\qquad u\in W^{1,p}_0(D).$$
\end{df}

For a comprehensive treatment of $\Gamma$-convergence and its applications, we refer the reader to the classical monograph \cite{DM93}. A detailed discussion of shape optimization problems, the theory of $p$-quasi open sets, and the framework of $p$-capacitary measures can be found in \cite{BB05} and the references therein.

In the present setting, the key facts that will be relevant to our analysis are the following.
We summarize below some fundamental properties of $\gamma_p$-convergence which are essential in the study of variational problems involving $p$-capacitary measures.

\begin{itemize}
\item\textbf{Simplification in the supercritical case.} When $p>d$, the notion of $\gamma_p$-convergence for a sequence $(\O_n)$ of open sets reduces to the Hausdorff convergence of the corresponding complements (closed sets) $\overline{D}\setminus\O_n$ in the ambient domain. In this regime, the distinction between open and $p$-quasi open sets disappears due to the Sobolev embedding.

\item\textbf{Compactness.} The space of $p$-capacitary measures supported in $D$ is compact with respect to the $\gamma_p$-convergence. More precisely, any sequence $(\mu_n)$ of $p$-capacitary measures admits a $\gamma_p$-convergent subsequence, whose limit is again a $p$-capacitary measure.

\item\textbf{Metrizability and PDE characterization.} The space of $p$-capacitary measures supported in $D$ endowed with the $\gamma_p$-convergence is metrizable. Moreover, the convergence $\mu_n\to\mu$ in the $\gamma_p$-sense is equivalent to the strong convergence in $L^p(D)$ of the corresponding solutions $w(\mu_n)\to w(\mu)$, where each $w(\mu_n)$ solves the nonlinear PDE:
\be\label{wmu}
\begin{cases}
-\Delta_pw+\mu_n w^{p-1}=1&\text{in }D\\
w\in W^{1,p}_0(D),\quad w\ge0.
\end{cases}\ee
This correspondence allows one to define a natural metric, the so-called $\gamma_p$-distance:
$$d_{\gamma_p}(\mu,\nu)=\|w(\mu)-w(\nu)\|_{L^p}.$$

\item\textbf{Continuity of the eigenvalue.} The first eigenvalue $\lambda_p(\mu)$ depends continuously on the capacitary measure $\mu$ with respect to the $\gamma_p$-convergence.

\item\textbf{Characterization of $\gamma_p$ limits.} When $p\le d$, the class of all $\gamma_p$-limits of sequences of (quasi) open sets coincides precisely with the full space of $p$-capacitary measures. On the other hand, for $p>d$, the situation is more rigid: the class of open sets whose complements converge in the Hausdorff sense is already compact, and hence captures all possible $\gamma_p$-limits.

The first example illustrating the general behavior, in the subcritical regime $p\le d$ is the construction in \cite{CM97}, where a sequence $(\O_n)$ was shown to $\gamma_2$-converge to the Lebesgue measure. The full characterization of $\gamma_2$-limits in terms of capacitary measures was established in \cite{DMM87}, and the general result for all $p\le d$ was later obtained in \cite{DMD88}.
\end{itemize}

\begin{df}\label{def:omu}
Given a $p$-capacitary measure $\mu$ on $D$, we define the associated set $\O_\mu$ as the region where $\mu$ is finite. More precisely,
$$\O_\mu=\{w(\mu)>0\},$$
where $w(\mu)\in W^{1,p}_0(D)$ denotes the unique weak solution to the nonlinear elliptic problem
$$\begin{cases}
-\Delta_p w+\mu w^{p-1}=1&\text{in }D\\
w\ge0,\quad w\in W^{1,p}_0(D).
\end{cases}$$
\end{df}

The set $\O_\mu$ is $p$-quasi open by construction and plays a central role in the analysis of limit configurations arising in variational problems involving $p$-capacitary measures.

A fundamental result, which is needed to prove the existence of an optimal domain $\O_{opt}$ for the shape functional $\F_{p,q}$ introduced in \eqref{fpq}, is the following compatibility condition between the $\gamma_p$- and $\gamma_q$-limits of a sequence of domains.

\begin{trm}\label{th:ofin}
Let $1<q<p <+\infty$ and let $(\O_n)\subset\A(D)$ be a sequence of admissible domains such that
$$\O_n\stackrel{\gamma_p}\to\mu\qquad\text{and}\qquad\O_n\stackrel{\gamma_q}\to\nu.$$
for some $p$- and $q$-capacitary measures $\mu$ and $\nu$, respectively. Then, the measure $\nu$ vanishes on the $p$-quasi open set $\O_\mu$ where $\mu$ is finite.
\end{trm}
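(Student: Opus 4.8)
Since $\O_\mu=\{w(\mu)>0\}$ and $w(\mu)\ge0$, the assertion that $\nu$ vanishes on $\O_\mu$ is equivalent to
\[
\int_D \big(w(\mu)\big)^q\,d\nu=0,
\]
because $\int_D (w(\mu))^q\,d\nu=\int_{\{w(\mu)>0\}}(w(\mu))^q\,d\nu$ and the integrand is strictly positive exactly on $\O_\mu$, so the vanishing of this integral forces $\nu(\O_\mu)=0$. My strategy is to feed the $p$-torsion functions of the $\O_n$ into the $\Gamma$-liminf inequality furnished by the $\gamma_q$-convergence, and then to show that these functions carry no $q$-energy defect in the limit.

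Write $w_n:=w(\infty_{\Rd\setminus\O_n})$ for the $p$-torsion function of $\O_n$, so that $-\Delta_p w_n=1$ in $\O_n$ and $w_n\in W^{1,p}_0(\O_n)$. The PDE characterization of $\gamma_p$-convergence gives $w_n\to w(\mu)$ strongly in $L^p(D)$, and testing the equation with $w_n$ shows that $(w_n)$ is bounded in $W^{1,p}_0(D)$; hence $w_n\rightharpoonup w(\mu)$ weakly in $W^{1,p}_0(D)$. Because $q<p$ and $D$ is bounded, $p$-capacity zero implies $q$-capacity zero and $W^{1,p}(D)\hookrightarrow W^{1,q}(D)$, so $w_n\in W^{1,p}_0(\O_n)\subset W^{1,q}_0(\O_n)$; thus $w_n$ is admissible for the functionals $F_n^q$ associated with the $\gamma_q$-convergence, with $F_n^q(w_n)=\int_D|\nabla w_n|^q\,dx$. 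Since $w_n\to w(\mu)$ in $L^q(D)$ as well, the $\Gamma$-liminf inequality for $F_n^q\stackrel{\Gamma}{\to}F_\nu^q$ yields
\[
\int_D|\nabla w(\mu)|^q\,dx+\int_D\big(w(\mu)\big)^q\,d\nu\ \le\ \liminf_{n}\int_D|\nabla w_n|^q\,dx.
\]

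The heart of the proof is to show that the right-hand side equals $\int_D|\nabla w(\mu)|^q\,dx$, i.e. that $\nabla w_n\to\nabla w(\mu)$ strongly in $L^q(D)$; the two displays then force $\int_D(w(\mu))^q\,d\nu\le0$ and we are done. First I would record that the holes are Lebesgue-negligible inside $\O_\mu$: from $\int_{\O_\mu}|w_n-w(\mu)|^p\,dx\to0$ together with $w(\mu)>0$ on $\O_\mu$ one gets $|(\Rd\setminus\O_n)\cap\O_\mu|\to0$. Next, since $(\nabla w_n)$ is bounded in $L^p$, the family $|\nabla w_n|^q$ is equi-integrable for every $q<p$, so by Vitali's theorem it suffices to prove $\nabla w_n\to\nabla w(\mu)$ almost everywhere; I would obtain this from the monotonicity of $-\Delta_p$ by a Boccardo--Murat-type argument, testing the two equations with truncations of $w_n-w(\mu)$ localized off the shrinking holes.

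I expect this last point to be the main obstacle. The subtlety is that there is a genuine \emph{$p$-energy} defect: testing the two limiting equations shows $\int_D|\nabla w_n|^p\,dx\to\int_D|\nabla w(\mu)|^p\,dx+\int_D(w(\mu))^p\,d\mu$, so whenever $\mu\neq0$ on $\O_\mu$ the gradients do \emph{not} converge strongly in $L^p$. The whole point is that this excess energy is stored in $p$-capacitary correctors concentrated near the holes $\Rd\setminus\O_n$, a set whose Lebesgue measure tends to $0$ on $\O_\mu$; because $q<p$, an application of H\"older's inequality on this vanishing-measure support shows that the excess contributes only $o(1)$ to the $q$-energy. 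This is precisely where the strict inequality $q<p$ is essential, and making the almost-everywhere convergence of the gradients rigorous in the presence of the limit measure $\mu$ is the technically delicate step.
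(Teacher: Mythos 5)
Your overall strategy is sound and genuinely different from the paper's: you feed the $p$-torsion functions $w_n$ of $\O_n$ directly into the $\gamma_q$ $\Gamma$-liminf inequality and then try to kill the right-hand side by proving $\nabla w_n\to\nabla w(\mu)$ strongly in $L^q(D)$. The reduction is correct ($\nu(\O_\mu)=0$ is equivalent to $\int (w(\mu))^q\,d\nu=0$, since $p$-capacity null sets are $q$-capacity null for $q<p$), the measure estimate $|(\Rd\setminus\O_n)\cap\O_\mu|\to0$ is correct, and the Vitali reduction to a.e.\ convergence of gradients is correct; moreover the target statement is in fact true (it is a corrector-type theorem for relaxed Dirichlet problems: Dal Maso--Murat for $p=2$, Casado-D\'iaz for general $p$). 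But your sketch of the crucial a.e.\ convergence step has a concrete flaw: truncations of $w_n-w(\mu)$ are not admissible test functions for the equation of $w_n$, because $w(\mu)>0$ on the holes $(\Rd\setminus\O_n)\cap\O_\mu$, so $w_n-w(\mu)\notin W^{1,p}_0(\O_n)$; and pairing the limit equation with such truncations raises the integrability issue that $\int w^{p-1}\,d\mu$ need not be finite even though $\int w^p\,d\mu<\infty$. The workable form of your idea is to rewrite the torsion equation on all of $D$ as $-\Delta_p w_n=\mathds{1}_{\O_n}-\sigma_n$ with $\sigma_n\ge0$ a locally bounded singular measure carried by $\partial\O_n$, observe that $(\mathds{1}_{\O_n})$ is precompact in $W^{-1,p'}(D)$ by Rellich duality, and invoke the Boccardo--Murat theorem on a.e.\ convergence of gradients for monotone equations with right-hand sides of the form ``compact in $W^{-1,p'}$ plus bounded measures''; combined with $w_n\rightharpoonup w(\mu)$ weakly in $W^{1,p}_0(D)$ (which follows from the $L^p$ convergence in the $\gamma_p$ PDE-characterization plus the energy bound), this does deliver a.e.\ convergence, hence strong $L^q$ convergence, and your argument closes. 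As written, though, the key step is only a heuristic (the ``excess energy stored near the holes'' picture presupposes the corrector structure you would need to prove).

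For comparison, the paper avoids gradient convergence entirely. It takes for $(w_n)$ not the torsion functions but a recovery sequence for $w=w(\mu)$, so that $\int|\nabla w_n|^p\,dx\to\int|\nabla w|^p\,dx+\int w^p\,d\mu$ exactly, and it tests the $\gamma_q$-liminf not with $w_n$ but with $u_n=\varphi\,H(w_n)$, where $\varphi=(w-\alpha)^+$ and $H(s)=(s/\alpha)\wedge1$. The dangerous term $\varphi H'(w_n)\nabla w_n$ is estimated by H\"older with exponents $p/q$ and $p/(p-q)$ --- the same place where $q<p$ enters in your equi-integrability remark --- and vanishes in the limit because $\varphi H'(w)=0$: the supports of $(w-\alpha)^+$ and of $H'(w)$ are disjoint. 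Letting $\alpha\to0$ then gives $w=0$ $\nu$-a.e. So both proofs exploit the same mechanism (an excess of $p$-energy cannot contribute to $q$-quantities), but the paper implements it with elementary truncations, H\"older and dominated convergence, whereas your route needs the nontrivial Boccardo--Murat input; in exchange, yours would establish a stronger intermediate fact of independent interest, namely strong $W^{1,q}$ convergence of the torsion functions for every $q<p$.
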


\begin{proof}
Let $w$ be the solution of \eqref{wmu} associated with the measure $\mu$, and let $(w_n)$ be an optimal sequence for $w$ with respect to the $\Gamma$-convergence. That is, $w_n\in W_0^{1,p}(\O_n)$ and
$$\lim_n\int|\nabla w_n|^pdx=\int|\nabla w|^pdx+\int w^pd\mu.$$
Fix now a parameter $\alpha>0$ and define the auxiliary function
$$\varphi(x)=\big(w(x)-\alpha\big)^+,$$
where, as usual, $x^+$ denotes the positive part of $x$. In addition, we introduce the Lipschitz continuous function $H:\R\to[0,1]$ given by
$$H(s)=(s/\alpha)\wedge1.$$
We then consider the sequence $(u_n)$ defined by
$$u_n=\varphi\,H(w_n).$$
Since $\varphi\in W_0^{1,p}(\O)$ and $H(w_n)$ is bounded, it follows that $u_n\in W^{1,p}_0(\O_n)$ for each $n$. Moreover, by the strong convergence $w_n\to w$ in $L^p$ and the continuity of $H$, it is easy to see that $u_n\to u=\varphi\,H(w)$ strongly in $L^q$. Applying the $\Gamma$-liminf inequality to the sequence $(u_n)$ we obtain
$$\liminf_n\int|\nabla u_n|^qdx\ge\int|\nabla u|^qdx+\int u^qd\nu.$$
Recalling that $u=\varphi H(w)=(w-\alpha)^+$ yields
$$\int|\nabla u|^qdx+\int u^qd\nu=\int|\nabla\varphi|^qdx+\int\varphi^qd\nu,$$
so that
\[\begin{split}
\int\big((w-\alpha)^+\big)^qd\nu&\le-\int|\nabla\varphi|^qdx+\liminf_n\int|\nabla u_n|^qdx\\
&\le\limsup_n\int\Big[\big|H(w_n)\nabla\varphi+\varphi H'(w_n)\nabla w_n\big|^q-|\nabla\varphi|^q\Big]dx.
\end{split}\]
In order to estimate the right-hand side, we use the standard inequality
$$|b|^q-|a|^q\le C|b-a|\big(|a|^{q-1}+|b|^{q-1}\big),$$
which holds for some constant $C>0$ depending only on $q$. Applying this inequality with $a=\nabla\varphi$ and $b=H(w_n)\nabla\varphi+\varphi H'(w_n)\nabla w_n$, we obtain
\[\begin{split}
\int\big((w-\alpha)^+\big)^qd\nu&\le C\limsup_n\int\Big[\big|H(w_n)-1\big||\nabla\varphi|+\big|\varphi H'(w_n)\nabla w_n\big|\Big]\\
&\qquad\cdot\Big[|\nabla\varphi|^{q-1}+\big|\varphi H'(w_n)\nabla w_n\big|^{q-1}\Big]\,dx.
\end{split}\]
Applying H\"older's inequality, we estimate
\[\begin{split}
&\int\Big[\big|H(w_n)-1\big||\nabla\varphi|+\big|\varphi H'(w_n)\nabla w_n\big|\Big]\\
&\qquad\cdot\Big[|\nabla\varphi|^{q-1}+\big|\varphi H'(w_n)\nabla w_n\big|^{q-1}\Big]\,dx.\\
\le& C\bigg[\int|\nabla\varphi|^q\big|H(w_n)-1\big|^q+\big|\varphi H'(w_n)\nabla w_n\big|^qdx\bigg]^{1/q}\\
&\qquad\cdot\bigg[\int|\nabla\varphi|^q+\big|\varphi H'(w_n)\nabla w_n\big|^qdx\bigg]^{(q-1)/q}.
\end{split}\]
for some constant $C>0$ depending only on $q$. Next, we apply H\"older's inequality once more to the term involving $\varphi H'(w_n)\nabla w_n$, obtaining
$$\int\big|\varphi H'(w_n)\nabla w_n\big|^qdx\le\bigg[\int|\nabla w_n|^p\bigg]^{q/p}\bigg[\int\big|\varphi H'(w_n)\big|^{p/(p-q)}\bigg]^{(p-q)/p}.$$
Since $(w_n)$ is an optimal sequence for $w$ in the $\Gamma$-convergence, it follows that as $n\to\infty$, the right-hand side above tends to
$$\bigg[\int|\nabla w|^pdx+\int w^pd\mu\bigg]^{q/p}\bigg[\int\big|\varphi H'(w)\big|^{p/(p-q)}\bigg]^{(p-q)/p},$$
which vanishes, since $\varphi H'(w)=0$. Similarly, considering the term
$$\int|\nabla\varphi|^q\big|H(w_n)-1\big|^qdx,$$
we observe that $H(w_n)\to H(w)$ almost everywhere, and by the definition of $\varphi$ and $H$, we have $H(w)=1$ wherever $\varphi$ is nonzero. Thus, the dominated convergence theorem applies, yielding
$$\lim_{n\to\infty}\int|\nabla\varphi|^q|H(w_n)-1|^qdx=0.$$
Combining all these facts and recalling the previous estimates, we conclude that
$$\int\big((w-\alpha)^+\big)^qd\nu=0.$$
We deduce that $(w-\alpha)^+=0$ $\nu$-almost everywhere and, since $\alpha>0$ was chosen arbitrarily, we deduce that $w=0$ $\nu$-almost everywhere, then concluding the proof.
\end{proof}

\section{Existence of optimizers in $\Rd$}\label{sexis}

In this section we give a proof of existence of optimal domains for the shape functional $\Fpq$, with $1\le q\le p\le+\infty$, in the entire space $\R^d$.

\subsection{Proof of Theorem \ref{Th:ExistenceMpqmpq}}

\begin{proof} We prove only the existence for the maximization problem, the proof being the same in the minimization case: the only difference is to fix $\lambda_q=1$ instead of $\lambda_p=1$.

Fix $p>q$, and let $(\O_n)_n$ be a minimizing sequence of bounded open sets, which can also be taken smooth. By the scaling invariance of $\Fpq$, one may assume that $\lambda_p(\O_n)=1$ for every $n$. Since every $\O_n$ is bounded, one can find $R_n>0 $ such that $\O_n\subset B(0,R_n)$, and fixing a unit vector $\mathbf{e}$ of $\Rd$ we define recursively the sequence $(x_n)_n$ of vectors by
$$\begin{cases}
x_{n+1}=x_n+(R_n+R_{n+1}+1)\,\mathbf{e}\\
x_1=0.
\end{cases}$$
The set
$$\tilde\O=\bigcup_{n\geq 1} \big(x_n + \O_n\big)$$
is still an open (and smooth) subset of $\Rd$ and moreover, since the sets $(x_n+\O_n)_n$ are disjoint, we have
$$\lambda_p(\tilde \O)=\inf_n\lambda_p(\O_n)=1,\qquad\lambda_q(\tilde\O)=\inf_n\lambda_q(\O_n)=M(p,q)^{-q}.$$
Therefore, $\tilde\O$ is a maximizer of $\Fpq$.

It only remains to prove the second claim. If $q>d$, we know from Theorem \ref{th:BozzolaBrasco24} that for any open subset $\O$ of $\Rd$, we have
$$\lambda_p(\O)>0\iff\rho(\O)<+\infty\iff\lambda_q(\O)>0.$$
In particular, $\lambda_p(\tilde\O)=1$ implies $\lambda_q(\tilde\O)>0$ which yields $M(p,q)<+\infty$.

If $q\le d<p$, since $\Rd\setminus\Z^d$ has a finite inradius, we have $\lambda_p(\Rd\setminus\Z^d)>0$ and yet, $\Z^d$ has zero $q$-capacity, so that $ \lambda_q(\Rd\setminus\xi\,\Z^d)=\lambda_q(\Rd)=0$, which gives $M(p,q)=+\infty$.

Finally, let us consider the case $q<p\le d$. By following the construction for $p=2$ of Cioranescu and Murat \cite{CM97}, generalized in \cite{DMD88} to any $p\le d$, one can find a sequence of radii $(r_n)$ such that the sequence of perforated balls $(B_n)$ defined by
$$B_n=B(0,1)\setminus\Big(\frac1n\Z^d\,\oplus\,B(0,r_n)\Big)$$
satifies $\lambda_p(B_n)\to+\infty$, while $\lambda_q(B_n)\to C$ for some constant $C>0$. Consequently, $\Fpq(B_n)\to+\infty$ and so $M(p,q)=+\infty$.
\end{proof}

\subsection{The nonlinearity gap}

We consider here the shape functional
\be\label{Jpq}
J_{p,q}(\O)=\lambda_p(\O)-\lambda_q(\O)\qquad\text{with }p>q,
\ee
that we call {\it``nonlinear gap''}, and the related shape optimization problem
\be\label{ng1}
\min\big\{J_{p,q}(\O)\ :\ \O\subset\R^d\big\}.
\ee
Notice that, by taking $\O=tB$ with $B$ a ball of unitary radius and $t\to0$, the supremum of the shape functional $J_{p,q}$ is $+\infty$.

\begin{trm}\label{thng1}
Let $1\le q\le p\le+\infty$. The shape optimization problem \eqref{ng1} admits a smooth unbounded solution $\O_0$ that is homothetic to the minimizer of the functional $\Fpq$ obtained in Theorem \ref{Th:ExistenceMpqmpq}. More precisely, the problems \eqref{ng1} and \eqref{df:mpq,Mpq} (for $m(p,q)$) have the same optimal domains, up to homotheties.
\end{trm}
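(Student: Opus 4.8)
The plan is to exploit the fact that, although $J_{p,q}$ is \emph{not} scale invariant, the minimization in \eqref{ng1} runs over the whole class of subsets of $\R^d$, a class which is stable under homotheties. I would therefore first fix a shape $\O$ with $\lambda_q(\O)>0$ and minimize the one-parameter family $t\mapsto J_{p,q}(t\O)$ over $t>0$. Using the homogeneities $\lambda_p(t\O)=t^{-p}\lambda_p(\O)$ and $\lambda_q(t\O)=t^{-q}\lambda_q(\O)$, this reduces to minimizing $g(t)=t^{-p}\lambda_p(\O)-t^{-q}\lambda_q(\O)$, an elementary one-variable problem whose unique critical point is $t_*=\big(p\lambda_p(\O)/(q\lambda_q(\O))\big)^{1/(p-q)}$, and at which $g$ attains a global minimum (indeed $g(t)\to+\infty$ as $t\to0^+$ and $g(t)\to0^-$ as $t\to+\infty$, since $q<p$).

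A direct substitution then yields the minimal value over scalings,
$$\min_{t>0}J_{p,q}(t\O)=-\frac{p-q}{p}\Big(\frac qp\Big)^{q/(p-q)}\Fpq(\O)^{-pq/(p-q)},$$
the crucial structural point being that the shape-dependence enters \emph{only} through $\Fpq(\O)$, with a constant prefactor independent of $\O$. Since the exponent $-pq/(p-q)$ is negative, the map $s\mapsto -\frac{p-q}{p}(q/p)^{q/(p-q)}s^{-pq/(p-q)}$ is strictly increasing on $(0,+\infty)$; hence minimizing $\min_{t>0}J_{p,q}(t\O)$ over all shapes is equivalent to \emph{minimizing} $\Fpq(\O)$, i.e.\ to computing $m(p,q)$. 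Composing the two minimizations gives
$$\inf_{\O\subset\R^d}J_{p,q}(\O)=-\frac{p-q}{p}\Big(\frac qp\Big)^{q/(p-q)}m(p,q)^{-pq/(p-q)},$$
which is finite and strictly negative, because $m(p,q)\ge q/p>0$ by Proposition \ref{prop:minimalityHolder}.

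For existence and the identification of optimizers, I would invoke Theorem \ref{Th:ExistenceMpqmpq} to obtain a smooth minimizer $\O_{p,q}^m$ of $\Fpq$ (unbounded, by the disjoint-union construction used there) with $\lambda_q(\O_{p,q}^m)>0$, and set $\O_0=t_*\,\O_{p,q}^m$ with $t_*$ the optimal scale computed above. Homothety preserves smoothness and unboundedness, so $\O_0$ is a smooth unbounded solution of \eqref{ng1}, homothetic to $\O_{p,q}^m$. Conversely, any optimizer of \eqref{ng1} must satisfy $\lambda_q>0$: a domain with $\lambda_q(\O)=0$ gives $J_{p,q}(\O)=\lambda_p(\O)\ge0$, whereas the infimum is strictly negative, so such domains never compete. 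This confirms both that the optimization takes place within the admissible class of Theorem \ref{Th:ExistenceMpqmpq} and that the optimal domains for \eqref{ng1} and for $m(p,q)$ coincide up to homotheties.

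The step requiring the most care is not the calculus but the clean \emph{decoupling} of the scale and shape minimizations: one must verify that the constant multiplying $\Fpq(\O)^{-pq/(p-q)}$ is genuinely independent of $\O$, so that the pointwise-in-shape optimization can be performed uniformly, and that restricting to $\lambda_q>0$ entails no loss of generality. The endpoint situations are delicate for a different reason: the gap functional \eqref{Jpq} is intrinsically defined for $1\le q<p<+\infty$, and the case $p=+\infty$ must be read through the limiting quantity $\lambda_p^{1/p}\to\rho(\O)^{-1}$ rather than through $\lambda_\infty$ itself, so I would treat $1\le q<p<+\infty$ as the substantive range and recover the endpoint statement by this limiting interpretation.
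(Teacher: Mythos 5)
Your proposal is correct and takes essentially the same route as the paper's proof: decouple scale from shape by minimizing $t\mapsto t^{-p}\lambda_p(\O)-t^{-q}\lambda_q(\O)$, find $t_*=\big(p\lambda_p(\O)/(q\lambda_q(\O))\big)^{1/(p-q)}$, observe that the resulting minimal value $-\frac{p-q}{p}\big(\frac{q}{p}\big)^{q/(p-q)}\Fpq(\O)^{-pq/(p-q)}$ depends on the shape only through $\Fpq(\O)$ monotonically, and conclude by rescaling the minimizer from Theorem \ref{Th:ExistenceMpqmpq}. Your additional verifications (strict negativity of the infimum via $m(p,q)\ge q/p$, exclusion of domains with $\lambda_q(\O)=0$, and the caveat about reading the endpoint cases $p=+\infty$ and $q=p$ through the limiting quantities) are sound refinements of details the paper leaves implicit.
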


\begin{proof}
If in problem \eqref{ng1} we consider $t\O$ instead of $\O$ we have
$$\min_{\O\subset\R^d}\min_{t>0}\Big\{t^{-p}\lambda_p(\O)-t^{-q}\lambda_q(\O)\Big\}.$$
The optimization with respect to $t$ can be easily computed and we obtain
$$\min_{\O\subset\R^d}-\frac{p-q}{p}\Big(\frac{q}{p}\Big)^{q/(p-q)}\Big(\frac{\lambda_q^{1/q}(\O)}{\lambda_p^{1/p}(\O)}\Big)^{pq/(p-q)},$$
so that the problem is reduced to the minimization of $\Fpq$. If $\O_{opt}$ is a solution for $\F_{p,q}$ we then have
$$\O_0=\overline t\,\O_{opt}\qquad\text{with }\overline t=\Big(\frac{p\lambda_p(\O_{opt})}{q\lambda_q(\O_{opt})}\Big)^{1/(p-q)},$$
which concludes the proof.
\end{proof}

\begin{rem}\label{rem:generalphiRd}
By following the same arguments as above we actually obtain that the generalized problem 
$$\min\Big\{\Phi\big(\lambda_p(\O)^{\frac{1}{p}},\lambda_q(\O)^\frac{1}{q}\big)\ :\ \O\subset\Rd,\ \text{open}\Big\}$$
admits a solution and its minimizers are homotetic to the minimizers of $\Fpq$, for any function $\Phi:\R^*_+\times\R^*_+\to\R$ such that 
\begin{enumerate}
\item[(i)] $\Phi(x,y)$ is either increasing in $x$ or decreasing in $y$;
\item[(ii)] For any $x > 0$, there exists $s_x^* >0$ such that $\inf_{s>0}\Phi(s\,x,\,s) = \Phi (s_x^*\,x,\,s_x^*)$.
\end{enumerate}
This class of function contains the function 
$$
f(x,y) = \frac{x}{y},
$$
which corresponds to our main functionnal $\Fpq$, it also contains the family
$$
j_{\alpha,\beta} (x,y)  = x^\alpha - y^\beta,
$$
for any $\alpha> \beta >0$ where the nonlinearity gap $J_{p,q}$ corresponds to $j_{p,q}$.
\end{rem}

\section{Description of the optimizers}
We now give a proof of Theorem \ref{th:discret}

\begin{proof}We prove the result for the minimization problem, the proof being the same for the maximization case.

Take $(\O_n)_n$ a minimizing sequence of bounded sets for $\Fpq$ such that $\lambda_q(\O_n)=1$ for all $n$, and assume without loss of generality that the sequence $(\lambda_p(\O_n))_n$ is decreasing. We set for each $n$, $\tilde{\O}_n =(1+\frac{1}{n})^{-1/q}\O_n $; since the $\tilde{\O}_n$ are bounded, for each $n$ there exists $R_n>0$ such that $\O_n\subset B(0,R_n)$. We choose a sequence $(x_n)_n$ of elements of $\Rd$ such that the distance between the $(B(x_n,R_n))_n$ is at least $1$.

Consider now $\xi_0>0$ such that
$$
\lambda_q(\Rd\setminus \xi_0\Z^d) >2\qquad\text{and}\qquad\lambda_p(\Rd\setminus \xi_0\Z^d)> \lambda_p(\O_1).
$$
We define the set 
$$A=\big(\Rd\setminus\xi_0\Z^d \big) \setminus \bigcup_{n\ge1} \overline B(x_n, R_n+1/4),$$
and we construct the sequence of sets $(A_n)_n$ such that 
$$
A_n=A\cup\bigcup_{1\leq k\leq n} C_k,\qquad\text{with}\quad C_k = x_k +\ \big(\overline{B}(0,\ R_k+1/4) \setminus \xi_k\Z^d\big) \cup \tilde{\O}_k,
$$
where $(\xi_n)_n$ are suitable real numbers. We claim that we can choose the real numbers $\xi_n$ such that
\be\label{claim}
1+\frac{1}{n+1} < \lambda_q(A_n) \leq 1+\frac{1}{n}.
\ee
Then, since the sequence $(A_n)_n$ is increasing for the inclusion of sets, it converges for both the $\gamma_p$ and $\gamma_q$ convergences to 
$$
\O_m = \bigcup_{n\geq 1 } A_n = A\cup \bigcup_{n\geq 1} C_n.
$$
By $\gamma_q$-convergence, $\lambda_q(\O_m) = \lim \lambda_q(A_n) = 1$, and from the inclusion of sets 
$$
\lambda_p(\O_m) \leq \inf_n \lambda_p(\tilde{\O}_n) = \inf_n \big(1+\frac{1}{n}\big)^{p/q} \lambda_p(\O_n) = m(p,q)^p,$$
which means that $\O_m$ is optimal and of discrete complement.

It only remains to prove the claim \eqref{claim}. By inclusion the right inequality is of course satisfied for any $n$, since
$$\lambda_q(A_n)\le\lambda_q(\tilde{\O}_n) = 1+\frac{1}{n}.$$
As for the left inequality, it comes from the fact that for any open set $\O$ in $\Rd$ containing the unit ball $B(0,1)$ and any open subset $\omega$ of the ball of radius $3/4$, the sequence of open sets $(C_\xi)_{\xi>0}$ defined by 
$$C_\xi = \O\setminus \big(B(0,1)\cap \xi\Z^d\big) \cup \omega,$$ 
$\gamma_q$-converges to $\O\setminus \overline{B}(0,1) \cup \omega$ as $\xi$ tends to $0$, since $q>d$.
\end{proof}

\section{The bounded case}\label{s:DDD}

In this section, we study the problem in which the admissible domains $\O$ are contained in a given open bounded subset $D$ of $\Rd$. We consider the optimization problems \eqref{pb:boundedmin} and \eqref{pb:boundedmax}. Of course, in order to have nontrivial problems, we require that $\lambda_q(D)<1$ in \eqref{pb:boundedmin} and $\lambda_p(D)<1$ in \eqref{pb:boundedmax}.

\subsection{Proof of Theorem \ref{quasi}}

\begin{proof}
Let us consider first the minimization problem \eqref{pb:boundedmin} and let $(\O_n)$ be a minimizing sequence. Since the $\gamma_p$ and $\gamma_q$ convergences are compact, possibly passing to subsequences, we may also assume that
$$\O_n\stackrel{\gamma_p}\to\mu\qquad\text{and}\qquad\O_n\stackrel{\gamma_q}\to\nu,$$
for some $p$-capacitary measure $\mu$ and $q$-capacitary measure $\nu$. If $p>d$, since the $\gamma_p$ convergence reduces to the Hausdorff convergence of the complements $\overline D\setminus\O_n$, the measure $\mu$ will be of the form $\mu=\infty_{\overline D\setminus\O}$ for a suitable open set $\O$. By Theorem \ref{th:ofin} we have $\nu=0$ on $\O$ and, by the continuity of $\lambda_p$ and $\lambda_q$ with respect to the $\gamma_p$ and $\gamma_q$ convergences respectively, we have
$$\begin{cases}
\lambda_p(\O)=\lim_n\lambda_p(\O_n)=\inf\eqref{pb:boundedmin},\\
\lambda_q(\O)\ge\lambda_q(\nu)=\lim_n\lambda_q(\O_n)=1.
\end{cases}$$
Since $\lambda_q(D)<1$ we may take $\O'$ such that $\O\subset\O'\subset D$ and $\lambda_q(\O')=1$. We have $\lambda_p(\O')\le\lambda_p(\O)$ and so $\O'$ is optimal.

We now consider the case $q\le d$. Again, by the continuity of $\lambda_p$ and $\lambda_q$ with respect to the $\gamma_p$ and $\gamma_q$ convergences respectively, we have
$$\begin{cases}
\lambda_p(\mu)=\lim_n\lambda_p(\O_n)=\inf\eqref{pb:boundedmin},\\
\lambda_q(\nu)=\lim_n\lambda_q(\O_n)=1.
\end{cases}$$
By Theorem \ref{th:ofin} we have that $\nu=0$ on the set $\O_\mu$ where $\mu$ is finite; therefore, from the monotonicity property \eqref{monot} we deduce that
$$\begin{cases}
\lambda_p(\O_\mu)\le\lambda_p(\mu)=\inf\eqref{pb:boundedmin},\\
\lambda_q(\O_\mu)\ge\lambda_q(\nu)=1.
\end{cases}$$
As above, we may take $\O'$ such that $\O_\mu\subset\O'\subset D$ and $\lambda_q(\O')=1$. We have $\lambda_p(\O')\le\lambda_p(\O)$, which gives the optimality of $\O'$.

The existence of an optimal domain for the maximization problem \eqref{pb:boundedmax} can be obtained in a similar way by repeating the argument above.
\end{proof}

We now prove that if one could prove the existence of solutions for the bounded scale invariant problems 
$$
\begin{aligned}
m(p,q,D) &= \inf\big\{ \Fpq (\O) \ : \ \O\in D \big\},\\
M(p,q,D) &= \sup\big\{ \Fpq (\O) \ : \ \O\in D \big\},
\end{aligned}
$$
these solutions would actually be global solutions.

\begin{trm}\label{th:inf=inf}
For any $p<q$, and any $D$ open subset of $\Rd$
$$m(p,q,D)=m(p,q)\qquad\text{and}\qquad M(p,q,D)=M(p,q)$$
\end{trm}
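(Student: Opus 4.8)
The plan is to leverage the scale and translation invariance of $\Fpq$ in order to transplant an arbitrary competitor into $D$, combined with the observation that the unconstrained extremal values are already approached by \emph{bounded} open sets. Since the whole argument only uses these invariances, it is insensitive to which of $p,q$ is larger.

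First I would dispose of the two easy inequalities. Every $\O\subset D$ is in particular an open subset of $\Rd$, so the admissible class for the constrained problems is contained in that for the unconstrained ones; passing to the infimum and the supremum gives at once $m(p,q,D)\ge m(p,q)$ and $M(p,q,D)\le M(p,q)$.

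For the reverse inequalities I would first record that $m(p,q)$ and $M(p,q)$ are realized along sequences of bounded open sets: this is exactly what the proof of Theorem \ref{Th:ExistenceMpqmpq} furnishes, since there the minimizing and maximizing sequences are taken bounded (indeed smooth), and the competitors driving $\Fpq$ to $+\infty$ when $M(p,q)=+\infty$ (perforated balls) are bounded as well. Granting this, I would fix a ball $B(x_0,\delta)\subset D$, which exists because $D$ is open and nonempty, and, given any bounded open $\O\subset B(0,R)$ with $\lambda_q(\O)>0$, set $t=\delta/(2R)$ and $\O':=x_0+t\O\subset B(x_0,\delta/2)\subset D$. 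Since $\lambda_p(t\O)=t^{-p}\lambda_p(\O)$ and $\lambda_p$ is manifestly translation invariant (and likewise for $\lambda_q$, and for $\rho$ when $p=\infty$), the scaling factors cancel in the quotient, so that $\Fpq(\O')=\Fpq(\O)$. Thus every value taken by $\Fpq$ on a bounded open set is also taken on some admissible $\O'\subset D$, which yields $m(p,q,D)\le m(p,q)$ and $M(p,q,D)\ge M(p,q)$; when $M(p,q)=+\infty$ the same transplantation applied to a maximizing sequence forces $M(p,q,D)=+\infty$. Combining with the easy inequalities closes both equalities.

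The step I expect to require the most care is precisely the reduction to bounded competitors, namely the assertion that restricting the unconstrained $\inf$ and $\sup$ to bounded open sets leaves their values unchanged. I would base this on the explicit constructions inside the proof of Theorem \ref{Th:ExistenceMpqmpq} rather than on the formal definition $\lambda_p(\O)=\inf\{\lambda_p(A):A\subset\O,\ A\text{ bounded}\}$: that definition governs a single eigenvalue, whereas $\Fpq$ is a quotient of two of them, so one cannot naively pass to bounded subdomains inside the ratio. Once that reduction is granted, everything else is bookkeeping with the two invariances.
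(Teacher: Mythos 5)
Your proposal is correct and follows essentially the same route as the paper: both arguments take a bounded minimizing (or maximizing) sequence for the unconstrained problem and transplant it into $D$ by a translation and dilation, using the invariance of $\Fpq$ under these operations, with the opposite inequality being trivial by inclusion of the admissible classes. The only difference is that you explicitly flag and justify the reduction to bounded competitors (which the paper simply asserts when choosing its sequence in Theorem \ref{Th:ExistenceMpqmpq}); note that it can also be obtained directly from the definition $\lambda_p(\O)=\inf\{\lambda_p(A):A\subset\O,\ A\text{ bounded}\}$, since for the infimum one approximates $\lambda_p$ by a bounded $A\subset\O$ and uses monotonicity $\lambda_q(A)\ge\lambda_q(\O)$ (and symmetrically for the supremum), so the quotient structure poses no real obstruction.
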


\begin{proof}
We only prove the equality $m(p,q, D) = m(p,q)$, the proof of the equality $M(p,q, D) = M(p,q)$ being the same, the only difference is to fix $\lambda_p$ instead of $\lambda_q$.\\
Consider $p>q$ and $D\subset \Rd$ open and bounded. Take $(\O_n)_n$ a minimizing sequence of $\Fpq$ of open bounded subsets of $\Rd$, from the scale invariance, one may assume that for all $n$, $\lambda_q(\O_n) = 1$.\\
Since for all $n$, $\O_n$ is bounded one can find $t_n$ and $x_n$ such that $x_n + t_n\, \O_n \subset D$. Therefore, since $\Fpq$ is invariant for translations and scalings, one has
$$
\forall n,\qquad m(p,q,D)\leq \Fpq(x_n + t_n\, \O_n) = \Fpq(\O_n).
$$
Finally passing to the limit, we obtain, $m(p,q,D)\leq m(p,q)$, the other inequality being trivial this proves the theorem.
\end{proof}

\subsection{The constrained nonlinearity gap}

We consider here the optimization problem for the shape functional $J_{p,q}$ in \eqref{Jpq} subject to the constraint $\O\subset D$:
\be\label{ng2}
\min\big\{J_{p,q}(\O)\ :\ \O\subset D\big\}.
\ee

\begin{trm}\label{thng2}
The shape optimization problem \eqref{ng2} admits a solution $\O^*$ that is a $p$-quasi open set.
\end{trm}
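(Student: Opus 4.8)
The plan is to run the direct method exactly as in the proof of Theorem~\ref{quasi}, the only simplification being that here no renormalization of the eigenvalues is needed, since $J_{p,q}$ carries no equality constraint. First I would record that $J_{p,q}$ is bounded below on the whole admissible class: by Proposition~\ref{prop:minimalityHolder} one has $\lambda_p(\O)\ge(q/p)^p\lambda_q(\O)^{p/q}$, so writing $b=\lambda_q(\O)$,
$$J_{p,q}(\O)=\lambda_p(\O)-\lambda_q(\O)\ge(q/p)^p\,b^{p/q}-b,$$
and the right-hand side, as a function of $b>0$, attains a finite (negative) minimum because $p/q>1$. Hence $\inf\eqref{ng2}\in\R$. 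The same inequality shows that along any minimizing sequence $\lambda_q(\O_n)$ must stay bounded (otherwise $J_{p,q}(\O_n)\to+\infty$), whence $\lambda_p(\O_n)$ is bounded as well; this guarantees that the limit measures produced below are non-degenerate.

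Next I would take a minimizing sequence $(\O_n)\subset\A(D)$ of open (even smooth) sets. By the compactness of both the $\gamma_p$- and $\gamma_q$-convergences, after passing to a subsequence I may assume
$$\O_n\stackrel{\gamma_p}\to\mu,\qquad\O_n\stackrel{\gamma_q}\to\nu$$
for a $p$-capacitary measure $\mu$ and a $q$-capacitary measure $\nu$. By the continuity of the first eigenvalue with respect to $\gamma$-convergence,
$$\lambda_p(\mu)=\lim_n\lambda_p(\O_n),\qquad\lambda_q(\nu)=\lim_n\lambda_q(\O_n),$$
so that $\lambda_p(\mu)-\lambda_q(\nu)=\inf\eqref{ng2}$. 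My candidate minimizer is the $p$-quasi open set $\O_\mu=\{w(\mu)>0\}$ of Definition~\ref{def:omu}; since $w(\mu)\in W^{1,p}_0(D)\subset W^{1,q}_0(D)$ (as $D$ is bounded), the set $\O_\mu$ is also $q$-quasi open, so that $\lambda_q(\O_\mu)$ is well defined.

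The heart of the argument is to replace the abstract limits $\mu,\nu$ by the single geometric object $\O_\mu$. Applying Theorem~\ref{th:ofin} gives $\nu=0$ on $\O_\mu$. Consequently $\infty_{\R^d\setminus\O_\mu}\le\mu$ (the two measures agree off $\O_\mu$, and $\infty_{\R^d\setminus\O_\mu}=0\le\mu$ on $\O_\mu$), while $\nu\le\infty_{\R^d\setminus\O_\mu}$ (the two agree on $\O_\mu$, being both $0$ there, and $\nu\le+\infty$ off $\O_\mu$). The monotonicity property \eqref{monot}, used for the exponents $p$ and $q$ respectively, then yields
$$\lambda_p(\O_\mu)\le\lambda_p(\mu)\qquad\text{and}\qquad\lambda_q(\O_\mu)\ge\lambda_q(\nu).$$
Subtracting,
$$J_{p,q}(\O_\mu)=\lambda_p(\O_\mu)-\lambda_q(\O_\mu)\le\lambda_p(\mu)-\lambda_q(\nu)=\inf\eqref{ng2}.$$
Since $\O_\mu\subset D$ is admissible, the reverse inequality is automatic, so $\O^*:=\O_\mu$ realizes the minimum and is $p$-quasi open, as claimed.

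The step that really carries the proof is the compatibility between the two convergences encoded in Theorem~\ref{th:ofin}: it is precisely what lets the two inequalities point in opposite directions ($\le$ for $\lambda_p$, $\ge$ for $\lambda_q$) and thus combine into an upper bound for $J_{p,q}(\O_\mu)$ that matches the infimum. Everything else is bookkeeping with the monotonicity \eqref{monot} and the continuity of the eigenvalues, and I would expect no serious obstacle; the only points deserving a line of care are the lower bound for $J_{p,q}$ (handled above) and the observation that a $p$-quasi open set is automatically $q$-quasi open when $q<p$, which is what makes $\lambda_q(\O_\mu)$ and the second monotonicity inequality meaningful.
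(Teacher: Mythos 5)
Your proof is correct and follows essentially the same route as the paper's: a minimizing sequence with $\gamma_p$/$\gamma_q$ compactness, boundedness of the eigenvalues via $\F_{p,q}\ge q/p$ (which you simply spell out more explicitly than the paper does), continuity of the eigenvalues, the compatibility Theorem \ref{th:ofin} giving $\nu=0$ on $\O_\mu$, and the monotonicity \eqref{monot} to conclude that $\O_\mu$ is optimal. One small caveat, shared with the paper's own wording: the literal measure inequality $\infty_{\R^d\setminus\O_\mu}\le\mu$ can fail (e.g.\ $\mu=f\,dx$ with $f$ blowing up near a hyperplane makes $w(\mu)$ vanish there while $\mu$ assigns the hyperplane measure zero), so the inequality $\lambda_p(\O_\mu)\le\lambda_p(\mu)$ should instead be justified by the standard fact that every $u$ with $\int|u|^p\,d\mu<+\infty$ vanishes quasi-everywhere outside $\O_\mu$, hence is admissible for $\lambda_p(\O_\mu)$.
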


\begin{proof}
Let $(\O_n)$ be a minimizing sequence; by the compactness of $\gamma_p$ and $\gamma_q$ convergences we may assume that
$$\O_n\stackrel{\gamma_p}\to\mu\qquad\text{and}\qquad\O_n\stackrel{\gamma_q}\to\nu,$$
for some $p$-capacitary measure $\mu$ and $q$-capacitary measure $\nu$. Then the infimum in \eqref{ng2} is given by
$$\lambda_p(\mu)-\lambda_q(\nu).$$
Since for some $\O_0\subset D$ we have
$$J_{p,q}(\O_n)\le J_{p,q}(\O_0)$$
and
$$\F_{p,q}(\O_n)\ge q/p,$$
we find that the quantities $\lambda_p(\O_n)$ and $\lambda_q(\O_n)$ are both bounded. By Theorem \ref{th:ofin} we have that $\nu=0$ on the $p$-quasi open set $\O_\mu$ where $\mu$ is finite. Then
$$\lambda_p(\O_\mu)\le\lambda_p(\mu)\qquad\text{and}\qquad\lambda_q(\O_\mu)\ge\lambda_q(\nu),$$
which implies that $\O_\mu$ is optimal for the constrained shape functional $J_{p,q}$.
\end{proof}

\begin{rem}
By repeating step by step the proof of Theorem \ref{thng2} we obtain the existence of an optimal $p$-quasi open set $\O_{opt}$ for the shape optimization problem
$$\min\Big\{\Phi\big(\lambda_p(\O),\lambda_q(\O)\big)\ :\ \O\subset D\Big\}$$
whenever $\Phi:\R^+\times\R^+\to\R$ is a function such that:
\begin{itemize}
\item[(i)]$\Phi$ is continuous;
 \item[(ii)]$\Phi(x,y)$ is increasing in $x$ and decreasing in $y$;
\item[(iii)]$\lim_{t\to+\infty}\Phi(t^p,t^q)=+\infty$.
\end{itemize}
\end{rem}

\section{further remarks and open problems}\label{s:open}

In this section we summarize some questions that in our opinion should be considered.

\subsection{Questions for $M_{p,q}$}As noticed in Theorem \ref{Th:ExistenceMpqmpq} the only interesting case is $p>q>d$, since $M(p,q)=+\infty$ when $q\le d$.

\medskip\noindent{\it Question Q1}. The characterization of the maximal value $M_{p,q}$ is an interesting issue.

\medskip\noindent{\it Question Q2}. In Theorem \ref{Th:ExistenceMpqmpq} we have shown the existence of optimal unbounded domains $\O_{p,q}^M$ for which $\Fpq(\O_{p,q}^M)=M(p,q)$. Is it possible to have an optimal bounded domain? We believe this is not possible.

\medskip\noindent{\it Question Q3}. In Theorem \ref{th:discret} we have shown the existence of optimal domains $\O_{p,q}^M$ such that $\Rd\setminus\O_{p,q}^M$ is a discrete set. It would be interesting to show the existence of optimal domains $\O_{p,q}^M$ such that $\Rd\setminus\O_{p,q}^M$ is a {\it periodic} discrete set.

\medskip\noindent{\it Question Q4}. In the planar case $d=2$ it would be interesting to prove (or disprove) that the domain $\R^2\setminus X$ is optimal, where $X$ is the discrete periodic set consisting of all the centers of a planar tessellation made of regular hexagons. We believe this to be true, at least in the extreme case $p=\infty$.

\subsection{Questions for $m_{p,q}$}In this case Theorem \ref{Th:ExistenceMpqmpq} gives the existence of optimal unbounded domains $\O_{p,q}^m$ for every $q\le p$, and the inequality $m_{p,q}\ge q/p$ holds.

\medskip\noindent{\it Question Q5}. Also in this case the characterization of the minimal value $m_{p,q}$ is an interesting issue.

\medskip\noindent{\it Question Q6}. It would be interesting to see in which cases a {\it bounded} optimal domain $\O_{p,q}^m$ exists.

\medskip\noindent{\it Question Q7}. If $p=\infty$ it is easy to see that a ball is an optimal domain for every $q$. Is there a threshold $p^*$ (possibly depending on $q$) such that for $p\ge p^*$ a {ball} is still optimal?

\subsection{Questions for the problem with a bounded constraint}In Theorem \ref{quasi} we proved the existence of an optimal $p$-quasi open sets $\O_{opt}^M$ and $\O_{opt}^m$ solving respectively the maximization and the minimization problems \ref{pb:boundedmax} and \ref{pb:boundedmin}.

\medskip\noindent{\it Question Q8}. It would be interesting to study the regularity of $\O_{opt}^M$ and $\O_{opt}^m$ in various steps:
\begin{itemize}
\item[-] see if $\O_{opt}^M$ and $\O_{opt}^m$  are {open} sets;\\
\item[-] see if $\O_{opt}^M$ and $\O_{opt}^m$  are sets with finite perimeter;\\
\item[-] see if $\O_{opt}^M$ and $\O_{opt}^m$  have a higher regularity, for instance $C^{1,\alpha}$, possibly up to a small singular set.
\end{itemize}

\medskip\noindent{\it Question Q9}. In the case with a bounded constraint $D$ the maximization problem \eqref{pb:boundedmax} is slightly different from the minimization problem for $\Fpq$. We do not know if this last one admits an optimal solution.

\medskip\noindent{\it Question Q10}. We proved in Theorem \ref{thng2} existence of a $p$-quasi open solution to the constraint nonlinearity gap problem 
$$\min\big\{\lambda_p(\O)-\lambda_q(\O)\ :\ \O\subset D\big\},$$
and to its generalisation
$$\min\big\{\Phi\big(\lambda_p(\O),\lambda_q(\O)\big)\ :\ \O\subset D\big\},$$
where $\Phi(x,y)$ is a continuous function, increasing with respect to $x$ and decreasing with respect to $y$. The points listed in {\it Question 8} should be investigated in this more general framework.\\
We also showed in Remark \ref{rem:generalphiRd} that the minizers for the problem 
$$I = \min\big\{\Phi\big(\lambda_p(\O),\lambda_q(\O)\big)\ :\ \O\subset \Rd\big\},$$
are the same as the minimizers of $\Fpq$ in $\Rd$ up to a rescalling, and therefore, it is possible to construct a minimzing sequence of bounded sets $(\O_n)$ solving the problem $I$. Then, the sequence $(I_n)$ defined as 
$$
\forall n, \qquad I_n = \min\big\{\Phi\big(\lambda_p(\O),\lambda_q(\O)\big)\ :\ \O\subset B(0,n)\big\},
$$
converges to $I$. It would be of interest to understand whether the associated sequence of solution $(\O^*_n)$ converges to a global minimizer $\O^*$ and to understand the properties of this selected minimizer.

\bigskip\bigskip

\noindent{\bf Acknowledgments.} The work of GB is part of the project 2017TEXA3H {\it``Gradient flows, Optimal Transport and Metric Measure Structures''} funded by the Italian Ministry of Research and University. GB is member of the Gruppo Nazionale per l'Analisi Matematica, la Probabilit\`a e le loro Applicazioni (GNAMPA) of the Istituto Nazionale di Alta Matematica (INdAM). The work of DB and AdV has been supported by ANR STOIQUES, ANR-24-CE40-2216.

\bigskip


\bigskip\noindent
Dorin Bucur: Laboratoire de Math\'ematiques (LAMA), Universit\'e de Savoie\\
Campus Scientifique - 73376 Le-Bourget-Du-Lac, FRANCE\\
{\tt dorin.bucur@univ-savoie.fr}\\
{\tt https://bucur.perso.math.cnrs.fr/}

\bigskip\noindent
Giuseppe Buttazzo: Dipartimento di Matematica, Universit\`a di Pisa\\
Largo B. Pontecorvo 5, 56127 Pisa - ITALY\\
{\tt giuseppe.buttazzo@unipi.it}\\
{\tt http://www.dm.unipi.it/pages/buttazzo/}

\bigskip\noindent
Alexis de Villeroch\'e: Laboratoire de Math\'ematiques (LAMA), Universit\'e de Savoie\\
Campus Scientifique - 73376 Le-Bourget-Du-Lac, FRANCE\\
{\tt alexis.devilleroche@univ-smb.fr}\\
{\tt https://alexisdevilleroche.perso.math.cnrs.fr/}

\end{document}